\def\qed{\hfill{\raggedleft{\hbox{$\Box$}}} \smallskip}
\def\R{\mathbb{R}}
\def\N{\mathcal{N}}
\def\E{\mathbb{E}}
\def\P{\mathbb{P}}
\DeclareMathOperator{\Var}{Var}
\newcommand{\ds}{\displaystyle}
\theoremstyle{plain} \newtheorem{lem}{Lemma}
\theoremstyle{plain} \newtheorem{prop}[lem]{Proposition}
\theoremstyle{plain} \newtheorem{thm}[lem]{Theorem}
\theoremstyle{plain} \newtheorem{cor}[lem]{Corollary}
\theoremstyle{plain} 
\theoremstyle{plain} 
\theoremstyle{definition} 
\theoremstyle{definition}
\theoremstyle{definition} 
\theoremstyle{definition} 
\theoremstyle{definition}\newtheorem{ex}[lem]{Example}
\newlength\savedwidth
\def\1{\mathbf{1}}
\author{Francois Baccelli and Ngoc Mai Tran}
\address{Department of Mathematics, UT Austin, TX 78712, USA}
\thanks{This work was supported by an award from the Simons Foundation ($\#197982$ to The University of Texas at Austin)}
\DeclareMathOperator{\Tf}{\mathcal{T}f}
\title[Zeros of Random Tropical Polynomials]{Zeros of random tropical polynomials,\\
random polytopes and stick-breaking}
\begin{document}
\begin{abstract}
For $i = 0, 1, \ldots, n$, let $C_i$ be independent and identically distributed random variables with distribution $F$ with support $(0,\infty)$. The number of zeros of the random tropical polynomials $\Tf_n(x) = \min_{i=1,\ldots,n}(C_i + ix)$ is also the number of faces of the lower convex hull of the $n+1$ random points $(i,C_i)$ in $\R^2$. We show that this number, $Z_n$, satisfies a central limit theorem when $F$ has polynomial decay near $0$. Specifically, if $F$ near $0$ behaves like a $gamma(a,1)$ distribution for some $a > 0$, then $Z_n$ has the same asymptotics as the number of renewals on the interval $[0,\log(n)/a]$ of a renewal process with inter-arrival distribution $-\log(Beta(a,2))$. Our proof draws on connections between random partitions, renewal theory and random polytopes. In particular, we obtain generalizations and simple proofs of the central limit theorem for the number of vertices of the convex hull of $n$ uniform random points in a square. Our work leads to many open problems in stochastic tropical geometry, the study of functionals and intersections of random tropical varieties.
\end{abstract}

\maketitle

\section{Introduction}\label{sec:intro}

Consider the tropical min-plus algebra $(\R, \odot, \oplus)$, $a \odot b = a + b$, $a \oplus b = \min(a,b)$. A tropical polynomial $\Tf: \R \to \R$ of degree $n$ has the general form
\begin{equation}\label{eqn:tf}
\Tf(x) = \bigoplus_{i=0}^n(C_i\odot x^i) = \min_{i=0,\ldots,n} (C_i + ix),
\end{equation}
for coefficients $C_i \in \R$. The zeros of $\Tf$ are points in $\R$ where the minimum in~(\ref{eqn:tf}) is achieved at least twice. When the coefficients $C_i$'s are random, the zeros of $\Tf$ form a collection of random points in $\R$. A natural model of randomness is one where the $C_i$'s independent and identically distributed (i.i.d.) according to some distribution $F$, called the \emph{atom distribution} \cite{tao.vu}. In this paper, we derive the asymptotic distribution for the number of zeros as $n \to \infty$, under various atom distributions.

\begin{thm}\label{thm:main.rn} Let $F$ be a continuous distribution, supported on $(0,\infty)$. Assume $F(y) \sim Cy^a + o(y^a)$ as $y \to 0$ for some constants $C, a > 0$. Let $Z_n$ be the number of zeros of $\Tf$ in (\ref{eqn:tf}). Then as $n \to \infty$, 
\begin{equation}\label{eqn:main}
\frac{Z_n - \frac{2a+2}{2a+1}\log(n)}{\sqrt{\frac{2a(a+1)(2a^2+2a+1)}{(2a+1)^3}\log(n)}} \stackrel{d}{\to} \mathcal{N}(0,1).
\end{equation}
\end{thm}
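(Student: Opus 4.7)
My plan is to transform the lower-hull counting problem into a renewal-process CLT via a divide-and-conquer argument around the minimum of the $C_i$'s, combined with a log-scaling that renders the process asymptotically stationary. First, observe that $Z_n$ equals (up to an additive $O(1)$ term) the number of edges of the lower convex hull of $\{(i,C_i)\}_{i=0}^n$. The hypothesis $F(y)\sim Cy^a$ as $y\to 0$ implies that only $C_i$'s of order $n^{-1/a}$ contribute meaningfully to the hull, so applying the probability integral transform $Y_i = F(C_i)$ together with the pointwise identity $C_i = (Y_i/C)^{1/a}(1+o(1))$ for small $Y_i$ reduces the problem to the exactly power-law model $C_i = Y_i^{1/a}$ with $Y_i \sim U(0,1)$; the higher-order correction $o(y^a)$ in the hypothesis can be absorbed by a standard coupling, contributing only $o(\sqrt{\log n})$ fluctuations.

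Next, let $i^* = \argmin_i C_i$, which is always a hull vertex. Conditionally on $(i^*, C_{i^*})$, the remaining coefficients are i.i.d.\ from $F$ restricted to $[C_{i^*},\infty)$, so the left hull (on indices $0,\ldots,i^*$) and the right hull (on $i^*,\ldots,n$) are conditionally independent and, by the local power-law behavior, conditionally identically distributed. Since $i^*$ is approximately uniform on $\{0,\ldots,n\}$ and $-\log C_{i^*} = \log(n)/a + O_p(1)$, one obtains $Z_n = Z_n^L + Z_n^R + O(1)$, and it suffices to prove a CLT for the right-hull count $Z_n^R$ with a deterministic ``time budget'' $\log(n)/a$.

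For the right hull with successive vertices $i^* = j_0 < j_1 < j_2 < \cdots$, each $j_{k+1}$ is the index minimizing the slope $(C_j - C_{j_k})/(j - j_k)$ over $j > j_k$. The central distributional identity to establish is that in the $n\to\infty$ limit, the height ratios $C_{j_{k+1}}/C_{j_k}$ are asymptotically i.i.d.\ with distribution $\mathrm{Beta}(a,2)$: the factor $y^{a-1}$ in the Beta density comes from the power-law behavior of $F$ near $0$, while the factor $(1-y)$ encodes the hull-visibility constraint (the chosen vertex must lie below the chord from the current vertex to any later candidate). Consequently the log-ratios $T_k = -\log(C_{j_{k+1}}/C_{j_k})$ are asymptotically i.i.d.\ copies of $-\log\mathrm{Beta}(a,2)$, and $Z_n^R$ equals the number of renewals of the walk $\sum_k T_k$ before it exceeds $-\log C_{i^*}$.

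Applying the classical renewal CLT with mean $\mu = (2a+1)/(a(a+1))$ and variance $\sigma^2 = (2a^2+2a+1)/(a(a+1)^2)$ --- both obtained from $\mathrm{Beta}(a,2)$ via digamma and trigamma identities --- yields $Z_n^R \approx \mathcal{N}(\log(n)/(a\mu),\ \sigma^2\log(n)/(a\mu^3))$. Doubling via the (asymptotic) independence of the two halves produces the stated mean $(2a+2)/(2a+1)\log(n)$ and variance $2a(a+1)(2a^2+2a+1)\log(n)/(2a+1)^3$. The hard part will be rigorously justifying the asymptotic i.i.d.\ $\mathrm{Beta}(a,2)$ structure of the height ratios: the hull-vertex chain is only asymptotically stationary after log-scaling, and one must control both the boundary behavior near $i^*$ (where the first few ratios deviate from their stationary law) and the truncation effects near $n$ (where the chain is forced to terminate), showing that both contribute only $o(\sqrt{\log n})$ fluctuations to $Z_n$.
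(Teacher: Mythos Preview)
Your proposal follows the same overall strategy as the paper: split at the global minimum, identify the successive height ratios of hull vertices as asymptotically $\mathrm{Beta}(a,2)$, and invoke a renewal CLT on the log scale with time horizon $\log(n)/a$. The main technical difference is in how the reduction is executed. You reduce to an exact power law via the probability integral transform and then work directly with the discrete hull chain; the paper instead Poissonizes the $x$-coordinates --- replacing $(i,C_i)$ by a PPP $\Phi_n$ on $\R_+^2$ with intensity $n\,\lambda\times G$ --- and only at the end couples back to the discrete problem with an $\mathcal O_P(1)$ comparison (Proposition~\ref{prop:coupling}). The payoff of Poissonization is that the height-ratio law becomes \emph{explicit}: given the current height $s$, the next ratio has a computable density $I_s$ (Lemma~\ref{lem:key.general}) which is shown to converge to $\mathrm{Beta}(a,2)$ as $s\to 0$. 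The ``hard part'' you correctly flag --- that the chain is only asymptotically stationary --- is then dispatched by invoking Korshunov's CLT for state-dependent (Markov-modulated) random walks rather than the classical i.i.d.\ renewal theorem. Your route avoids the Poissonization machinery but would require proving the analogous non-stationary renewal CLT directly for the discrete chain, which is precisely what the paper sidesteps. (One small slip: your $\sigma^2=(2a^2+2a+1)/(a(a+1)^2)$ should read $1/a^2+1/(a+1)^2=(2a^2+2a+1)/(a^2(a+1)^2)$.)
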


The classical analogue of $Z_n$ the number of real zeros of a classical polynomial $ f(x) = \sum_{i=0}^nC_ix^i$ with random coefficients $C_i \in \R$ or $\mathbb{C}$. This problem has an extensive literature, ranging from works in the mid-twentieth century \cite{kac1, kac2,littlewood1938number} to a very recent paper by Tao and Vu \cite{tao.vu}, who proved a local universality phenomenon. Roughly speaking, this asserts that the asymptotic behavior of the zeros of $f$ as $n \to \infty$, appropriately normalized, should become independent of the choice of the atom distribution. Our main result is a version of this statement for tropical polynomials. 

Our setup provides a natural solution to counting zeros of polynomials over fields with valuations such as the $p$-adic numbers or Puiseux series, where methods developed for $\R$ and $\mathbb{C}$ do not easily apply. Such a polynomial $f$ comes with a tropicalized version $\Tf$. By the Fundamental Theorem of Tropical Algebraic Geometry, the zeros of $\Tf$ correspond to valuations of the zeros of $f$ (see, for example, \cite{bernd.trop}). In particular, $f$ and $\Tf$ have the same number of zeros. Studying combinatorial properties of classical varieties via tropicalization is the heart of tropical algebraic geometry. While zeros of random polynomials in fields with valuations have been studied \cite{evans}, to the best of our knowledge this is the first result in the tropical settings.

\subsection{Connections to random polytopes} 
Let $\mathcal{C}(i,C_i)$ denote the lower convex hull of the point $(i,C_i)$. These are faces with support vectors of the form $(1,\alpha)$ for some $\alpha \in \R$. Let $|\mathcal{C}(i,C_i)|$ be the number of such faces. As we shall review in Lemma \ref{lem:legendre}, 
$$ Z_n = |\mathcal{C}(i,C_i)|. $$
This connects random tropical polynomials with random polytopes. More explicitly, suppose the atom distribution $F$ is $Uniform(0,1)$. If we replace $i = 0, 1, \ldots, n$ by $n+1$ i.i.d. uniform points $U_i$ on $(0,1)$, then $(U_i,C_i)$ are $n+1$ uniform points on $(0,1)^2$, and their convex hull is a random polytope. Statistics of such random polytopes have been studied extensively, see \cite{schneider} for a recent review. For the convex hull of $n$ uniform points in a square, Groeneboom \cite{groenboom88} derived the central limit theorem for its number of vertices. It follows from his proof that the number of lower faces $|\mathcal{C}(U_i,C_i)|$ satisfies
\begin{equation}\label{eqn:general.r}
\frac{\E(|\mathcal{C}(U_i,C_i)|) - \frac{4}{3}\log(n)}{\sqrt{\frac{20}{27}\log(n)}} \stackrel{d}{\to} \N(0,1),
\end{equation}
and this is precisely Theorem \ref{thm:main.rn} for the case $a=1$. In fact, our proof of Theorem \ref{thm:main.rn} is based on an extension of Groeneboom's results. 
In the more recent paper \cite{groenboom12}, Groeneboom re-derived his results in \cite{groenboom88} using a simpler argument with very similar ideas. The key idea of our proof, Lemma \ref{lem:key}, appeared as Corollary 2 in \cite{groenboom12}. However, he did not make the connection to stick-breaking or renewal theory explicit, nor generalize to the case of non-homogeneous PPP.  

\subsection{Connections to stick-breaking} 
For $C_i$ the $i$-th value of a random walk with exchangeable increments, the lower convex hull $\mathcal{C}(i,C_i)$ is also known as the \emph{greatest convex minorant} of the walk $(C_i, i = 0, 1, \ldots, n)$. Various authors have studied greatest convex minorants (or concave majorants) of random walks \cite{joshPitman}, Brownian motion \cite{rossPitman, groeneboom1983concave}, L\'{e}vy processes and other settings \cite{bertoin2000convex, pitman2012convex}. A classical result by Andersen \cite{andersen1954fluctuations} states that if almost surely no two subsets of the increments have the same arithmetic mean, then $|\mathcal{C}(i,C_i)|$ is distributed as the number of cycles in a uniformly distributed random permutation of the set $\{1, \ldots, n\}$. Its asymptotics in this case is
$$
\frac{|\mathcal{C}(i,C_i)| - \log(n)}{\sqrt{\log(n)}} \stackrel{d}{\to} \N(0,1).
$$
Paraphrased, this is a very strong \emph{universality result} on the zeros of tropical polynomials generated randomly under this settings. In fact, an even stronger result holds: the partition of $n$ generated by the lengths of the faces in $\mathcal{C}(i,C_i)$ has the same distribution as the partition of $n$ generated by the cycles of a uniform random permutation. As $n \to \infty$, the lengths of these cycles converge in distribution to the lengths of sticks obtained from a \emph{uniform stick-breaking process} (cf. Section \ref{sec:stickbreak}). 

In Section \ref{sec:stickbreak}, we show that when $F$ is $exponential(1)$, conditioned on $C_n = 0$, the partition of $n$ generated by the lengths of the faces in $\mathcal{C}(i,C_i)$ is a partially exchangeable partition. The limiting distribution of cycle lengths converge to the partition lengths of a $Beta(2,1)$ stick-breaking process. To our knowledge, the $Beta(2,1)$ scheme has not been considered in the literature. The classical two-parameter family contains the $Beta(1,\theta)$ stick-breaking scheme \cite[\S 3]{csp}, \cite{tamara}. A fundamental difference is exchangeability: in the $Beta(1,\theta)$ case, one obtains an exchangeable partition of $n$, where as our partition is only partially exchangeable. However, like the classical case, the $Beta(2,1)$, and in general, the $Beta(2,a)$ stick-breaking scheme for $a > 0$ enjoy the Polya's urn connection. In particular, it is a version of the Bernoulli sieve of Gnedin et. al. \cite{gnedin, gnedin2}. For $F = exponential(1)$, $a = 1$, and the number of cycles in a $Beta(2,1)$ partition is precisely the number zeros of $\Tf$ conditioned on $C_n = 0$. In this case, $G$ of Proposition \ref{thm:ppp} is just the Lebesgue measure, and we are back to the settings of Groeneboom \cite{groenboom88}. By conditioning on the minimum index of the $C_i$'s, one obtains an elementary stick-breaking proof of the results of Groeneboom \cite{groenboom88} (cf. Section \ref{sec:stickbreak}).

\subsection{Proof overview}
To prove (\ref{eqn:general.r}), Groeneboom showed that the points of $(U_i,C_i)$ can be replaced by the points in $(0,1)^2$ of a homogeneous Poisson point process (PPP) with rate $n$. He then studied their lower convex hull by a pure jump Markov process on the vertices, indexed by the slopes of their support vectors.

Our first step is a generalization of this result to a class of non-homogeneous PPP on~$\R_+^2$. More precisely, let $F$ be the distribution function in the hypothesis of Theorem~\ref{thm:main.rn}. Consider the PPP with intensity measure 
$n \lambda \times G$, where $\lambda$ is the Lebesgue measure and $G$ is the
measure on $\R_+$ such that, for $x \geq 0$,
$$G([0,x])= -\ln(1-F(x)). $$
The generalization in question is Proposition \ref{thm:ppp} below. We use the following definition:\\

{ \setlength{\leftskip}{1cm} \setlength{\rightskip}{1cm} \noindent 
{\em For a sequence of random variables $(X_n, n \geq 1)$
and a deterministic sequence $(b_n, n \geq 1)$, we say $X_n = \mathcal{O}_P(b_n)$
if for all sequences $(c_n, n \geq 1)$, $c_n \to \infty$ as $n \to \infty$,}
$$ \P(|X_n| \geq b_nc_n) \to 0 \mbox{ as } n \to \infty. $$
}

\begin{prop}\label{thm:ppp}
Let $\lambda$ be the Lebesgue measure, $F$ be the distribution in Theorem \ref{thm:main.rn}, and $\Phi_n$ be a Poisson point process on $\R_+^2$ with intensity measure $n\lambda \times G$. For points of $\Phi_n$ in $(0,1) \times (0,F^{-1}(1-e^{-1}))$, let $|\mathcal{C}(\Phi_n)|$ be the number of lower faces in their convex hull. Then
$$ |\mathcal{C}(\Phi_n)| - (J_n + J_n') = \mathcal{O}_P(1), $$
where $J_n$ and $J'_n$ are independent random variables, each distributed as the number of renewals on $[0,\log(n)/a]$ of a delayed renewal process with inter-arrival distribution $-\log(Beta(a,2))$. Consequently, as $n \to \infty$, 
$$ \frac{|\mathcal{C}(\Phi_n)| - \frac{2a+2}{2a+1}\log(n)}{\sqrt{\frac{2a(a+1)(2a^2+2a+1)}{(2a+1)^3}\log(n)}} \stackrel{d}{\to} \mathcal{N}(0,1).$$
\end{prop}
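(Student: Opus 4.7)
The plan is to adapt Groeneboom's strategy from \cite{groenboom88, groenboom12}, which describes the lower convex hull of a homogeneous PPP through a pure-jump Markov process on its vertices, to the non-homogeneous setting at hand. The first step is to isolate the \emph{bottom vertex} of $\Phi_n$ in the strip, namely the point with minimum $y$-coordinate inside $(0,1)\times(0,F^{-1}(1-e^{-1}))$. This vertex splits the lower hull into a left-going and a right-going chain of vertices. Because the restrictions of a PPP to disjoint regions are independent PPPs, the two chains are, conditionally on the bottom vertex, independent, and they will yield the two summands $J_n$ and $J_n'$.

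I would then describe the right-going chain (the left one is symmetric) as a Markov process in the slope of the support line: given the current vertex $V_k$, the next vertex $V_{k+1}$ is the point of $\Phi_n$ to the right of $V_k$ which minimises the connecting slope. The Mecke/Slivnyak formula for the PPP with intensity $n\lambda\times G$ produces the joint law of consecutive slope increments. The hypothesis $F(y)\sim Cy^a$ near $0$ gives $G([0,y])\sim Cy^a$, hence the intensity is locally self-similar under the anisotropic scaling $(x,y)\mapsto(tx,t^{1/a}y)$. This self-similarity forces the increments of the logarithm of the slope to be asymptotically i.i.d., and the explicit Palm/Mecke computation (the key lemma, analogous to \cite[Corollary~2]{groenboom12}) identifies their common law as $-\log Beta(a,2)$. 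The first increment follows a different law (hence \emph{delayed}), because the bottom vertex is not stationary for the chain. The horizon $\log(n)/a$ emerges from the scaling: measured in the natural $y$-unit $n^{-1/a}$ of the limiting self-similar PPP, the strip has height proportional to $n^{1/a}$, i.e.\ $\log(n)/a$ in logarithmic coordinates.

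Combining the two chains yields $|\mathcal{C}(\Phi_n)|=J_n+J_n'+\mathcal{O}_P(1)$, where the error absorbs boundary vertices near the top of the strip together with the discrepancy between the exact density $G$ and its pure-power approximation. The claimed central limit theorem then follows by applying the classical renewal CLT to each of $J_n, J_n'$, with the stated normalisation emerging from the mean and variance of $-\log Beta(a,2)$ together with the horizon $\log(n)/a$. The most delicate step is the rigorous justification of the renewal structure: the vertex chain is not exactly stationary, since $G$ is only asymptotically a power law at $0$ and the strip is finite. One must therefore couple the chain to its self-similar scaling limit and transport both the i.i.d.\ increment property and the Beta identification, while checking that the coupling errors and boundary effects at both ends of the log-slope interval aggregate to only $\mathcal{O}_P(1)$ extra vertices.
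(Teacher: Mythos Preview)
Your high-level strategy matches the paper's: split $\mathcal{C}(\Phi_n)$ at the bottom vertex into two conditionally independent chains, exhibit a Markov/renewal structure along each chain, and invoke a renewal CLT on the horizon $\log(n)/a$. Where you diverge is in the technical execution.

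First, the paper does not parametrise the chain by the \emph{slope} of the supporting line. It tracks the $y$-coordinate of successive vertices (ordered top-to-bottom) and shows that the ratio $B_i=y(V^\downarrow_i)/y(V^\downarrow_{i-1})$, conditional on $y(V^\downarrow_{i-1})=s$, has an explicit distribution $I_s$ on $[0,1]$ computed directly from the geometry of Figure~\ref{fig:small.triangle}; as $s\to 0$ one gets $I_s\to Beta(a,2)$. So the Beta law drops out of a one-line computation with the measure $\lambda\times G$, without any Mecke/Slivnyak machinery or self-similarity argument. The delay, incidentally, comes from the \emph{top} vertex $V_0^\downarrow$ (whose $y$-coordinate has a non-stationary law), not from the bottom vertex as you suggest.

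Second, and more importantly, the paper does not couple the chain to a self-similar scaling limit. The walk $S_i=-\log y(V_i^\downarrow)$ is genuinely \emph{state-dependent}: given $S_i=t$, the next increment has law $-\log I_{\exp(-t)}$, which is only asymptotically $-\log Beta(a,2)$. Rather than manufacture i.i.d.\ increments by coupling, the paper invokes Korshunov's CLT for transient Markov-modulated random walks \cite{korshunov.clt}, which requires only that $\E(-\log\xi_s)\to\mu$ at rate $o((\!-\!\log s)^{-1/2})$, $\Var(-\log\xi_s)\to\sigma^2$, and uniform integrability of the squared jumps. These are verified from the explicit formula for $I_s$ and the hypothesis $G([0,s])=Cs^a+o(s^a)$. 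This sidesteps entirely the delicate error-control you anticipate in your last paragraph: there is no coupling to build and no ``discrepancy between $G$ and its pure-power approximation'' to aggregate, because Korshunov's theorem absorbs the state dependence directly. Your proposed coupling route is plausible but would demand substantially more work to make rigorous than the paper's off-the-shelf application of \cite{korshunov.clt}.
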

Theorem \ref{thm:main.rn} is a discrete version of this setup. Indeed, let $\lambda_n$ be a discrete measure on $\R$ which puts mass $1$ at every point $i/n$ for $i = 0, 1, \ldots$, and $0$ elsewhere. On each half line $\{i/n\} \times (0,\infty)$, run an independent PPP with intensity measure $G$. By design, the first jump of this process is distributed as $F$. But only the first point can possibly contribute to the lower convex hull. Thus the number of tropical zeros $Z_n$ equals $|\mathcal{C}(\widetilde \Phi_n)|$, where $\widetilde \Phi_n$
is a PPP with intensity measure $\lambda_n \times G$. A direct coupling between $\Phi_n$ and $\widetilde\Phi_n$ (cf. Proposition~\ref{prop:coupling}) proves that
$$ |\mathcal{C}(\Phi_n)| - |\mathcal{C}(\widetilde \Phi_n)| = \mathcal{O}_P(1), $$
and Theorem \ref{thm:main.rn} follows.

\subsection{Scope and organization} We hope to kindle interests in researchers from both stochastic and tropical geometry. However, this paper has a rather narrow scope of counting the number of zeros for a natural model of random tropical polynomials. This is perhaps the simplest linear functional of the simplest tropical variety. A review of both fields with rigorous definition of stochastic tropical geometry is better suited for subsequent work. 

Our paper is organized as follows. In Section \ref{sec:background} we prove the connection between zeros of $\Tf$ and the lower convex hull of the points $(i,C_i)$. While this is a special case of the well-known connection between regular subdivision of Newton polytopes and tropical varieties, we provide an explicit proof via Legendre transform for self-containment. Sections \ref{sec:stickbreak} to~\ref{sec:connections} treat the case $F = exponential(1)$ via two different proofs: discrete stick-breaking and Poisson coupling. These proofs and their interactions provide intuition for the proof of the main case. Section \ref{sec:main} proves Proposition \ref{thm:ppp}. Section \ref{sec:coupling} proves Theorem \ref{thm:main.rn} via a coupling argument. We summarize the paper and discuss open problems in Section \ref{sec:discuss}.

\subsection*{Notation} For a set of points $(x_i,y_i)$, let $\mathcal{C}(x_i,y_i)$ denote their lower convex hull. Let $|\mathcal{C}(x_i,y_i)|$ denote the number of faces.  For an underlying $\mathcal{C}(x_i,y_i)$, we list its vertices $(V_i, i \geq 0)$ in increasing $x$-coordinate. For a vertex $V$ of $\mathcal{C}(x_i,y_i)$, let $x(V)$ denote its $x$-coordinate, $y(V)$ its $y$-coordinate, and $i(V) \in \{0, \ldots, n\}$ its index. Identify the sequence $(x(V_j) - x(V_{j-1}), j = 1 , \ldots, |\mathcal{C}(x_i,y_i)|)$ with a partition of $[0,1]$ ordered by appearance, denoted $\Pi(\mathcal{C}(x_i,y_i))$. Similarly, identify the sequence $(i(V_j) - i(V_{j-1}), j = 1 , \ldots, |\mathcal{C}(x_i,y_i)|)$ with a partition of $n$ ordered by appearance, denoted $\Pi_n(\mathcal{C}(x_i,y_i))$.

We often split $\mathcal{C}(x_i,y_i)$ into the `lower left' $\mathcal{C}^+(x_i,y_i)$ and `lower right' $\mathcal{C}^-(x_i,y_i)$ convex hulls, the first consists of faces with support vectors $(1,\alpha)$ for $\alpha > 0$, and the second consists of those with $\alpha < 0$. Let $|\mathcal{C}^+(x_i,y_i)|$ and $|\mathcal{C}^-(x_i,y_i)|$ be the corresponding number of faces. For an underlying  $\mathcal{C}^+(x_i,y_i)$ or  $\mathcal{C}^-(x_i,y_i)$, let $(V_i^\downarrow, i \geq 0)$ be the vertices of listed in decreasing $y$-coordinate, $(V_i^\uparrow, i \geq 0)$ be the same set of vertices listed in increasing $y$-coordinate. For points in a fixed rectangle of some point process $\Phi$, we denote their lower convex hull by $\mathcal{C}(\Phi)$. Analogous quantities such as $\mathcal{C}^+(\Phi)$, $\Pi(\mathcal{C}(\Phi))$ follow the same naming convention. 

For $\alpha \in (0,\infty)$, let $L(\alpha)$ be the line orthogonal to the vector $(1,\alpha)$ and which supports $\mathcal{C}^+(\Phi)$. Let $L_x(\alpha)$ and $L_y(\alpha)$ be its $x$ and $y$-intercepts, respectively. Let $(x(\alpha),y(\alpha))$ be the vertex of $\mathcal{C}^+(\Phi)$ supported by $L(\alpha)$. If there are two or more such vertices, take the one with minimum $y$-coordinate. 

\section{Background}\label{sec:background}
We now derive the connection between the zeros of our tropical polynomial $\Tf$  in (\ref{eqn:tf}) and the lower convex hull $\mathcal{C}(i,C_i)$. This is a special case of a classical result in tropical algebraic geometry, see \cite[\S 2]{bernd.trop}, which has been rediscovered several times across different literature \cite{abg05, powerdiagram}. Write
$$ g(x) = -\Tf(x) = \max_{i=0,\ldots,n} (-C_i) + (-i)x. $$
Then $g$ is a convex, piecewise linear function. Its Legendre transform $\hat{g}$ is also convex and piecewise linear, given by
$$\hat{g}(w) =
\sup_x (wx -g(x)).$$
For $w \in [-n,0]$, $\hat{g}(w)$ is finite, and $-\hat{g}(w)$ is the $y$-intercept of the tangent to the graph of $g$ with slope $w$. Since $g$ is piecewise linear, the tangent line to $g$ only changes at its angular points (the points of discontinuity of its slope). We have \cite{powerdiagram, abg05}
$$\hat{g}(w) =
\inf\{\sum_i \lambda_i C_i: \sum_i(-i)\lambda_i = w, \lambda_i \geq 0, \sum_i \lambda_i = 1\}. $$
Thus, over $[-n,0]$, the graph of $\hat{g}$ is precisely $\mathcal{C}(-i,C_i)$, the lower faces of the convex hull of the set of points $\{(-i,C_i): i = 0, \ldots, n\}$. The angular points of $g$, which are the zeros of $\Tf$, are hence in bijective correspondence to the slopes of the faces of $\hat{g}$. There is also a clear bijection between the faces of $\mathcal{C}(-i,C_i)$ and those of $\mathcal{C}(i,C_i)$. We summarize these observations below.

\begin{lem}\label{lem:legendre} For $\Tf$ in (\ref{eqn:tf}), there is a bijection between the zeros of $\Tf$ and the faces of $\mathcal{C}(i,C_i)$. The multiplicity of a zero is the lattice length of the corresponding face. In particular, the number of zeros of $\Tf$, counting multiplicity, is the number of faces of $\mathcal{C}(i,C_i)$, counting their lattice lengths.
\end{lem}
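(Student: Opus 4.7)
The plan is to follow the Legendre transform strategy already sketched in the paragraph preceding the lemma, and to upgrade the observations there into a clean bijective correspondence. First I would set $g(x) = -\Tf(x)$, so $g$ is the pointwise maximum of the affine functions $x \mapsto -C_i - ix$ and is therefore convex and piecewise linear; its set of angular points (where the slope is discontinuous) is exactly the set of zeros of $\Tf$, since a zero is by definition a point where the minimum in \eqref{eqn:tf} is attained at two distinct indices. Next I would compute the Legendre transform $\hat g(w) = \sup_x(wx - g(x))$ using standard convex duality for a finite maximum of affine functions: over $w \in [-n, 0]$ the supremum is finite and equals
$$ \hat g(w) = \inf\Bigl\{\sum_{i=0}^n \lambda_i C_i : \sum_{i=0}^n (-i)\lambda_i = w,\ \lambda_i \geq 0,\ \sum_i \lambda_i = 1\Bigr\}, $$
which by definition is the lower convex hull function of the points $\{(-i, C_i) : i = 0, \ldots, n\}$. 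Hence the graph of $\hat g$ on $[-n, 0]$ is exactly $\mathcal{C}(-i, C_i)$.

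The bijection now comes from the standard fact that, for a convex piecewise linear function, Legendre duality exchanges angular points of $g$ with maximal linear pieces of $\hat g$: the slope of the linear piece of $\hat g$ passing through $(w, \hat g(w))$ at an interior breakpoint equals the location of an angular point of $g$, and conversely. Thus the angular points of $g$ (the zeros of $\Tf$) are in bijection with the faces of $\hat g$, i.e.\ with the faces of $\mathcal{C}(-i, C_i)$. Finally, reflecting $x \mapsto -x$ gives a manifest bijection between faces of $\mathcal{C}(-i, C_i)$ and faces of $\mathcal{C}(i, C_i)$, and composing yields the claimed bijection between zeros of $\Tf$ and faces of $\mathcal{C}(i, C_i)$.

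For the multiplicity statement, I would recall that the multiplicity of a zero $x^*$ of a univariate tropical polynomial is $i_{\max}(x^*) - i_{\min}(x^*)$, where $i_{\min}(x^*)$ and $i_{\max}(x^*)$ are respectively the smallest and largest indices at which $\min_i(C_i + ix^*)$ is attained. Under the Legendre correspondence established above, the face of $\mathcal{C}(i, C_i)$ associated to $x^*$ is precisely the edge whose endpoints are the indexed points $(i_{\min}(x^*), C_{i_{\min}(x^*)})$ and $(i_{\max}(x^*), C_{i_{\max}(x^*)})$, so its lattice length (the difference of $x$-coordinates, which are integers) equals $i_{\max}(x^*) - i_{\min}(x^*)$, matching the multiplicity.

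The only real thing to be careful about is boundary bookkeeping: I need to verify that the two extreme slopes $w = 0$ and $w = -n$ of $\hat g$ do not produce phantom faces, and that every face of $\mathcal{C}(i, C_i)$ corresponds to an interior breakpoint of $\hat g$ (equivalently, to a bona fide zero of $\Tf$ in $\R$ rather than a point at $\pm \infty$). This is straightforward from the fact that the two unbounded outer rays of $\mathcal{C}(i, C_i)$ correspond to the two unbounded affine pieces of $g$, which are not angular points. After this check, the bijection and multiplicity claims are both immediate, and the corollary about counting zeros with multiplicity follows by summing lattice lengths.
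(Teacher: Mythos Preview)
Your proposal is correct and follows essentially the same Legendre-transform argument the paper uses in the paragraph immediately preceding the lemma: set $g=-\Tf$, identify $\hat g$ over $[-n,0]$ with $\mathcal{C}(-i,C_i)$, and read off the bijection between angular points of $g$ and faces of $\hat g$. You are in fact more careful than the paper on two points it leaves implicit---the multiplicity/lattice-length identification via $i_{\max}(x^*)-i_{\min}(x^*)$, and the boundary bookkeeping at $w=0,-n$---so nothing is missing.
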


In light of Lemma \ref{lem:legendre}, it may be more natural to work with the \emph{max-plus} tropical algebra, where the polynomials are convex. Indeed, the max-plus algebra found applications in many areas \cite{BCOQ, elsner, gaubert, nmt}. We chose to work with min-plus, following the convention of tropical algebraic geometry \cite{bernd.trop}. 

\begin{figure}[h]
\includegraphics[width = \textwidth]{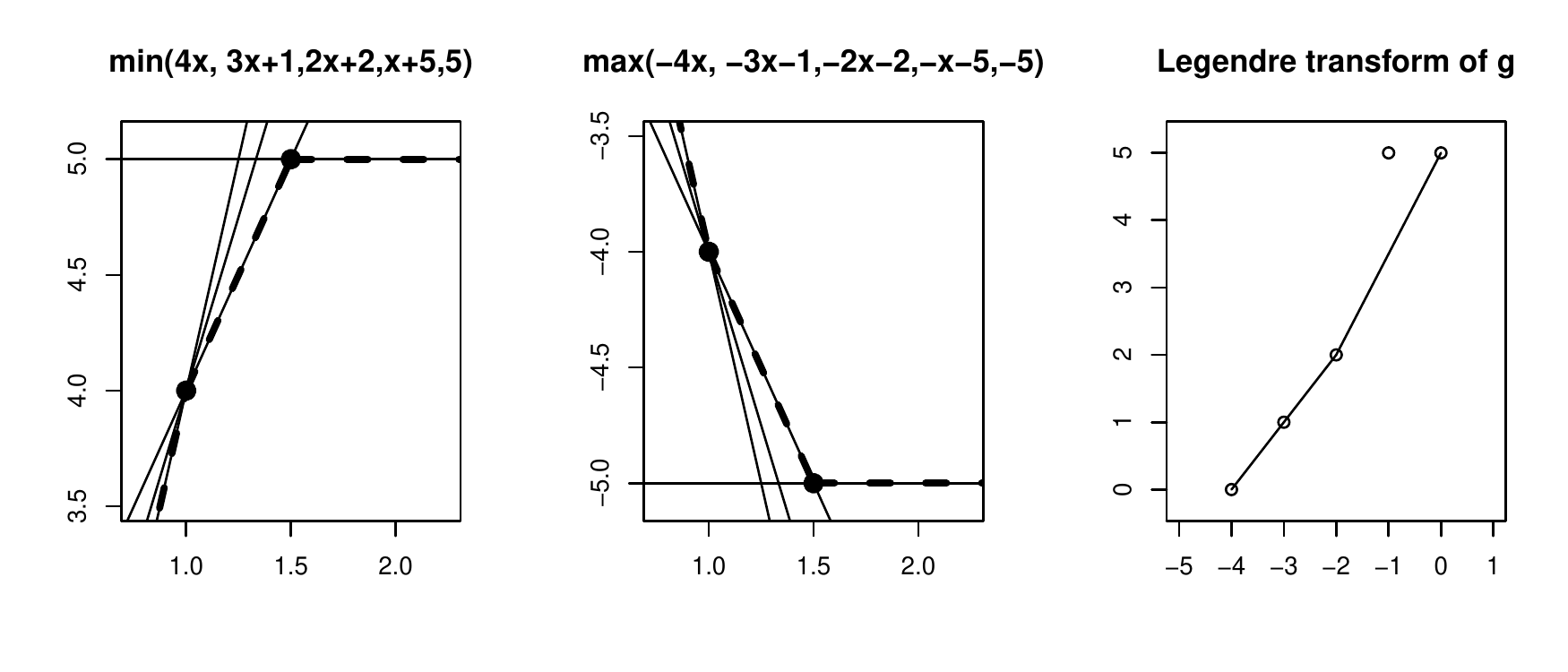}
\vskip-1.2cm
\caption{Graph of $\Tf$, $g$ and the Legendre transform of $g$.
The dotted thick lines are the actual graph of $\Tf$ and $g$, the continuous lines are the graph of the individual terms. \protect\label{fig:ex5}}
\end{figure}

\begin{ex}
Consider $\Tf(x) = 5\oplus5x\oplus2x^2\oplus1x^3\oplus x^4$. Figure \ref{fig:ex5} shows the graph of $\Tf$, $g$ and the Legendre transform of $g$. In this example, $\Tf$ has a double zero at $x = 1$ and another zero at $x = 3/2$. The Legendre transform of $g$ shows a face of lattice length 2 with slope 1, corresponding to the double zero at $x = 1$, and a face of lattice length 1 with slope $3/2$, corresponding to the zero at $x = 3/2$. The projection of the Legendre transform of $g$ onto the $x$-axis creates a subdivision of the line $-[0,4]$, which we identify with the line $[0,4]$. In this case the partition $\Pi_4(\mathcal{C}(i,C_i))$ is $(2,1,1)$. 
\end{ex}
 
\section{Stick-breaking proof for the exponential case}\label{sec:stickbreak}
We now derive a proof of Theorem \ref{thm:main.rn} for the case $F = exponential(1)$ by considering the partition $\Pi_n(\mathcal{C}(i,C_i))$. Computations in this case are significantly simpler, and they give insights into the proof of the general case. Furthermore, this setup connects our results with those in the literature, as discussed in Section \ref{sec:connections}. Here $a = 1$, and (\ref{eqn:main}) reads
$$ \frac{Z_n - \frac{4}{3}\log(n)}{\sqrt{20/27 \log(n)}} \stackrel{d}{\rightarrow} \N(0,1). $$

Recall that $V_0^\uparrow$ is the vertex of $\mathcal{C}(i,C_i)$ with minimum $C_i$. Since $F$ is continuous, its index $i(V_0^\uparrow)$ is a.s. unique. It is uniformly distributed on $\{0, \ldots, n\}$. Conditioned on $i(V_0^\uparrow) = k$, the random variables $C_i - C_k$ are i.i.d. $exponential(1)$. The partitions $\Pi_k(\mathcal{C}^+(i,C_i))$ and $\Pi_{n-k}(\mathcal{C}^-(i,C_i))$ are distributed as independent partitions $\Pi_k(\mathcal{C}(i,C_i))$ and $\Pi_{n-k}(\mathcal{C}(i,C_i))$ conditioned on the last $C_i$ to be zero. Thus it is sufficient to derive the distribution of $Z_n$ conditioned on the event $\{C_0 = 0\}$, in which case $\mathcal{C}(i,C_i) = \mathcal{C}^-(i,C_i)$.

\begin{prop}\label{prop:key}
For $n = 1, 2, \ldots$, let $p_n$ be the joint distribution function of $\Pi_n(\mathcal{C}^-(i,C_i))$. Then for all $m\in \{1, 2, \ldots, n\}$
and $x_1,\ldots,x_m\in \mathbb{N}$ with $\sum_{i=1}^m x_i=n$,
\begin{equation}\label{eqn:pn}
p_n(x_1, \ldots, x_m) = \prod_{i=1}^m\frac{x_i}{\binom{n-s_i+1}{2}},
\end{equation}
where $s_i = \sum_{j<i}x_j$, $s_1 = 0$. 
\end{prop}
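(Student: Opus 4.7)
The plan is induction on $n$, driven by the memoryless property of $\exp(1)$. First I would identify the second vertex of $\mathcal{C}^-(i,C_i)$. Because $C_0=0$ while $C_j>0$ almost surely for $j\geq 1$, the point $(0,0)$ is the lowest (and leftmost) vertex, and the first face of $\mathcal{C}^-$ joins it to $(I_1, C_{I_1})$, where $I_1 = \argmin_{1\leq j\leq n} C_j/j$ is the index minimising the slope from the origin. The ratios $C_j/j$ are independent, with $C_j/j$ exponential of rate $j$, so the elementary minimum-of-exponentials computation gives
$$ \mathbb{P}(I_1=x_1) \;=\; \frac{x_1}{\sum_{j=1}^n j} \;=\; \frac{x_1}{\binom{n+1}{2}}, $$
which is the $i=1$ factor in the asserted product.

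The key reduction step is as follows. Condition on $\{I_1=x_1,\,C_{x_1}=c\}$. The definition of $I_1$ forces $C_j>(j/x_1)c$ for all $j>x_1$, so by memorylessness the residuals $E_j := C_j-(j/x_1)c$ for $j=x_1+1,\ldots,n$ are i.i.d.\ $\exp(1)$ and independent of $c$. Since subtracting an affine function of $x$ from the $y$-coordinates preserves both the vertex set of the lower convex hull and the horizontal gaps between consecutive vertices, the tail of $\mathcal{C}^-$ beyond $(x_1,c)$ has the same partition distribution as a fresh instance of the problem built from $n-x_1$ i.i.d.\ $\exp(1)$ variables. This yields the one-step recursion
$$ p_n(x_1,\ldots,x_m) \;=\; \frac{x_1}{\binom{n+1}{2}}\, p_{n-x_1}(x_2,\ldots,x_m), $$
and iterating, with base case $p_n(n) = n/\binom{n+1}{2}$ (single face $(0,0)\to(n,C_n)$), produces the claimed product formula, since $\binom{n-s_1+1}{2}=\binom{n+1}{2}$ and the recursion on $n-x_1$ matches $s_i\mapsto s_i-x_1$ for $i\geq 2$.

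The main obstacle is the bookkeeping in this memoryless reduction: one must verify that the joint event $\{I_1=x_1\}$ combined with fixing $C_{x_1}=c$ really does leave the $E_j$'s i.i.d.\ $\exp(1)$ and independent of $c$, so that the recursion operates on a clean fresh copy of the problem. Continuity of $F$ rules out ties among the $C_j/j$, which removes any ambiguity in identifying $I_1$ and makes the conditional density argument immediate.
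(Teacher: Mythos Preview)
Your proposal is correct and follows essentially the same approach as the paper's proof. Both identify the first gap via $I_1=\argmin_{1\le j\le n} C_j/j$ and compute $\P(I_1=x_1)=x_1/\binom{n+1}{2}$ from the minimum-of-exponentials calculation, then use the memoryless property to show that the residuals $C_j-(j/x_1)C_{x_1}$ for $j>x_1$ are i.i.d.\ $\exp(1)$, reducing to a fresh instance of size $n-x_1$; the paper writes out the next conditional step explicitly rather than invoking the affine-shift-plus-recursion framing, but the argument is the same.
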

\begin{proof}
In this case, for any vertex $V$, $x(V) = i(V)$. 
Thus the partition $\Pi_n(\mathcal{C}^-(i,C_i))$ identifies with the sequence $(X_j, j = 1, \ldots, |\mathcal{C}^-(i,C_i)|)$ where $X_j = x(V_j)- x(V_{j-1})$. Equation (\ref{eqn:pn}) is the probability of the event $\{X_j = x_j, j = 1, \ldots, m = |\mathcal{C}^-(i,C_i)|\}$. 

Note that $x(V^\uparrow_1)$ is distributed as the argmin of $\{C_i/i, i = 1, \ldots, n\}$ for i.i.d. $exponential(1)$ $C_i$'s. Thus 
\begin{equation}\label{eqn:xi}
 \P(X_1 = x_1) = \frac{x_1}{\binom{n+1}{2}}.
\end{equation}
Conditioned on $x(V_1) = k$, for $i = k+1, \ldots, n$, $C_i \stackrel{d}{=} \frac{i}{k}C_k + \epsilon_i$ where $\epsilon_i$ are i.i.d. exponential(1). Thus, $x(V^\uparrow_2)$ is distributed as the argmin of 
$$\frac{C_i-C_k}{i-k} = \frac{C_k}{k} + \frac{\epsilon_i}{i-k} $$
$i = k+1, \ldots, n$, which is the integer that achieves the minimum of $ \ds \frac{\epsilon_i}{i-k}$. So conditioned on $x(V^\uparrow_1) = k$, $x(V^\uparrow_2)$ is distributed as
$$ \P(x(V_2) = k') = \frac{k'-k}{\binom{n-k+1}{2}}.$$
Now, $X_2 = x(V_2) - x(V_1)$. Thus conditioned on $X_1 = x_1$, $X_2$ is distributed as
$$ \P(X_2 = x_2) = \frac{x_2}{\binom{n-k+1}{2}} = \frac{x_2}{\binom{n-s_2+1}{2}}.$$
Repeating the argument for $j = 3, 4, \ldots$ completes the proof.
\end{proof}

If we just keep track of the number of components of $\Pi_n(\mathcal{C}^-(i,C_i))$, then the recursion argument of Proposition \ref{prop:key} translates into the following recurrence relation, which appears in \cite[Theorem 1]{buchta}. We expand on this connection in Section \ref{sec:connections}.

\begin{cor}
Let $p_k^n$ be the probability that the partition $\Pi_n(\mathcal{C}^-(i,C_i))$ has $k$ components. Then 
\begin{equation}\label{eqn:pkn}
p_k^n = \frac{1}{\binom{n+1}{2}}\sum_{j=k-1}^{n-1}(n-j)p_{k-1}^{j}.
\end{equation}
\end{cor}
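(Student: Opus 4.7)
The natural approach is to obtain the recursion by conditioning on the size $X_1$ of the first block of the partition $\Pi_n(\mathcal{C}^-(i,C_i))$. From Proposition \ref{prop:key} the joint law factors as
$$ p_n(x_1,\ldots,x_m) \;=\; \frac{x_1}{\binom{n+1}{2}} \cdot \prod_{i=2}^m \frac{x_i}{\binom{n-s_i+1}{2}}, $$
and the second factor is exactly the joint density $p_{n-x_1}(x_2,\ldots,x_m)$ of a partition on $\{1,\ldots,n-x_1\}$ (here we use that $n-s_i+1 = (n-x_1) - (s_i - x_1) + 1$ and the indices $s_i - x_1$ are precisely the partial sums for the shifted sequence starting at $x_2$). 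Thus conditionally on $X_1 = x_1$, the remaining partition $(X_2,X_3,\ldots)$ is distributed as $\Pi_{n-x_1}(\mathcal{C}^-(i,C_i))$.

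Granted this identification, the plan is a direct one-line decomposition. The partition has $k$ components if and only if, after removing a first block of some size $x_1 \in \{1,\ldots,n-k+1\}$, the remaining partition of $n-x_1$ has $k-1$ components. Using the marginal $\P(X_1 = x_1) = x_1/\binom{n+1}{2}$ from Proposition \ref{prop:key} (or equation (\ref{eqn:xi})), I would write
$$ p_k^n \;=\; \sum_{x_1=1}^{n-k+1} \P(X_1 = x_1)\, p_{k-1}^{n-x_1} \;=\; \frac{1}{\binom{n+1}{2}} \sum_{x_1=1}^{n-k+1} x_1\, p_{k-1}^{n-x_1}. $$

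Finally, I would perform the change of index $j = n - x_1$, so $x_1 = n-j$ and the range $x_1 \in \{1,\ldots,n-k+1\}$ becomes $j \in \{k-1,\ldots,n-1\}$. This turns the sum into $\sum_{j=k-1}^{n-1}(n-j)\,p_{k-1}^{j}$, giving exactly (\ref{eqn:pkn}).

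There is no real obstacle here since Proposition \ref{prop:key} does essentially all the work; the only point that deserves a brief justification is the Markov-style claim that conditionally on $X_1$ the remainder is distributed as a smaller copy of the same process, which, as indicated above, is immediate from inspecting the product formula (and is also geometrically transparent: once the first vertex on the lower convex hull is fixed, by the memoryless property of $exponential(1)$ the residual heights above the supporting line form an i.i.d.\ exponential family on the remaining indices, so the induced configuration is a rescaled copy of the original problem on $n - x_1$ points).
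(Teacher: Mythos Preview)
Your proof is correct and follows precisely the approach indicated in the paper: the corollary is stated there as an immediate consequence of the recursion argument in Proposition~\ref{prop:key}, and you have spelled out exactly that argument by conditioning on $X_1$, using the product form of $p_n$ to identify the conditional law of the remainder with $\Pi_{n-X_1}$, and then reindexing via $j = n - x_1$.
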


The recursion in Proposition \ref{prop:key} has a Polya's urn scheme interpretation. Recall the basic setup: start with $w$ white balls and $b$ black balls. At each step, remove a random ball from the urn and replace with two balls of the same color. Repeat this process $n$ times, thus adding in total $n$ balls to the urn. Let $W_n$ denote the number of white balls newly added. Then for $x \in \{0, 1, \ldots, n\}$, 
$$ \P(W_n = x) = \binom{n}{x}\frac{(w+x-1)!(b+n-x-1)!(w+b-1)!}{(w-1)!(b-1)!(w+b+n-1)!} $$
In particular, for $w = 2, b = 1$, then 
\begin{equation}\label{eqn:polya}
\P(W_n = x) = \frac{x+1}{\binom{n+2}{2}}.
\end{equation}
Thus, for each $n$, $X_1 - 1$ is distributed as the number of newly added white balls after $n-1$ steps of the Polya's urn scheme starting with \emph{two} white balls and one black ball. Conditioned on $X_1 = x_1$, $X_2 - 1$ is distributed as the number of newly added white balls after $n-x_1-1$ steps of the same urn scheme, and so on. From the sequential description of Polya's urn, the sequence of partition functions $(p_n, n \geq 1)$ can be generated via the following variation of the \emph{Chinese restaurant process} (CRP). 
Introduced by Dubins and Pitman \cite[\S 3]{csp}, this is a model for consistent random permutations. Start with an initially empty restaurant with an unlimited number of tables numbered $1, 2, \ldots$, each capable of seating an unlimited number of customers. Customers numbered $1, 2, \ldots$ arrive one by one and choose their seats according to the following rules.

\begin{algorithm}
\caption{$Beta(2,1)$ Chinese Restaurant Process}\label{alg:exact}
\begin{algorithmic}[1]
	\State Customer $1$ sits at table $1$.
	\State Suppose after $n$ customers, tables $1, 2, \ldots k$ are occupied with $x_1, \ldots, x_m$ customers, $\sum_ix_i = n$. The $(n+1)$-st customer joins according to the following rules: 
	\begin{itemize}
		\item He joins table $1$ with probability $\ds \frac{x_1 + 1}{n+2} $
		\item Sequentially for $i = 2, \ldots, m$, conditioned on not joining the previous tables $1 \leq j \leq i-1$, he joins table $i$ with probability $\ds \frac{x_i+1}{n-s_j+2}$, where $s_j = \sum_{j<i}x_i$.
		\item Conditioned on not joining any of the previous tables, he forms a new table.
		\end{itemize}
\end{algorithmic}
\end{algorithm}

\begin{ex}[n=3] The following tree computes the distribution of the random partition of $3$ customers. 
\begin{figure}[h]
\includegraphics[width=1.2\textwidth]{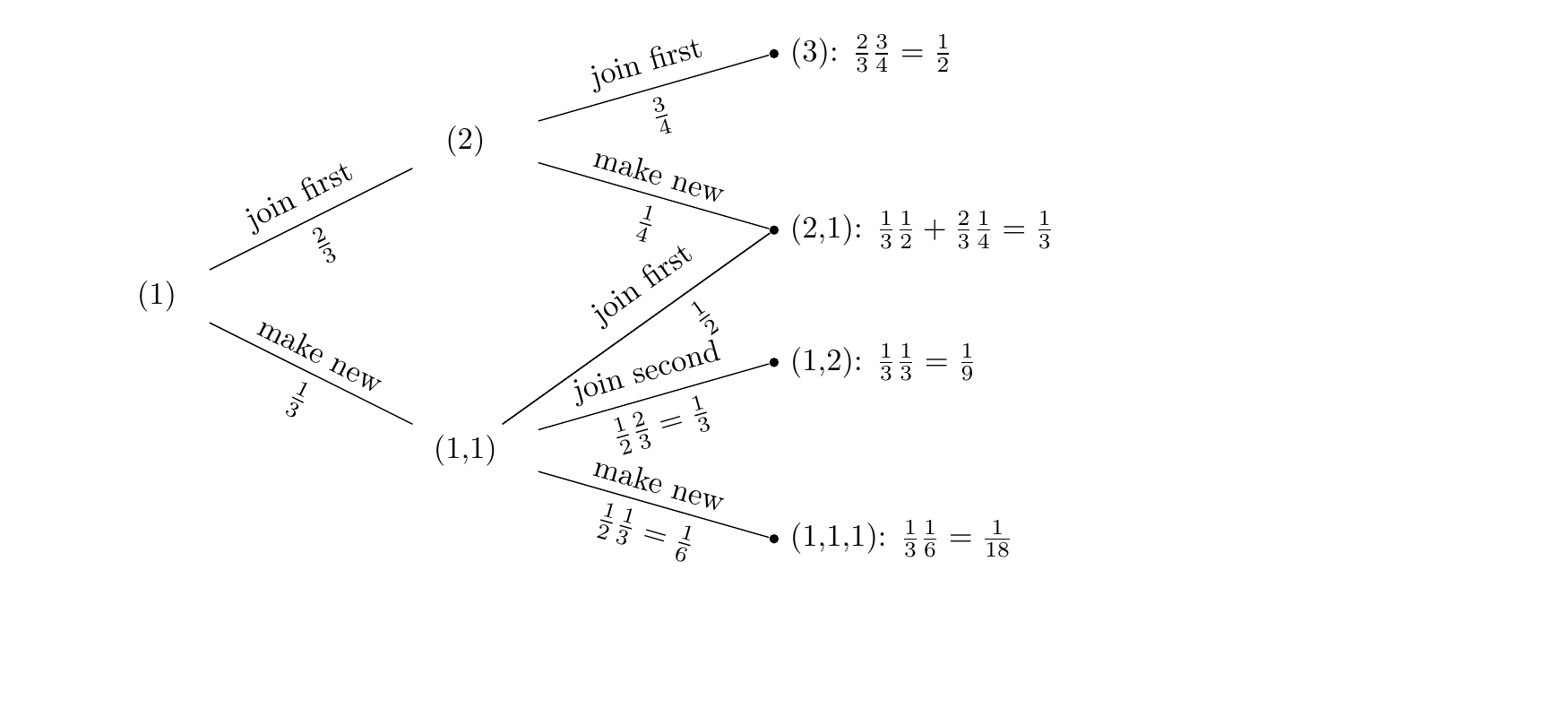}
\vskip-1cm
\end{figure}
\end{ex}
\begin{lem}\label{lem:crp.equals.pin} The partition of $n$ generated by the above CRP equals $\Pi_n(\mathcal{C}^-(i,C_i))$. In other words, for $n = 1, 2, \ldots$, the probability that tables $1, 2, \ldots, m$ have sizes $x_1, \ldots, x_m$ is exactly $p_n(x_1, \ldots, x_m)$ in (\ref{eqn:pn}).
\end{lem}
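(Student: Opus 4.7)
The plan is to prove, by induction on $n$, that $q_n(x_1, \ldots, x_m) = p_n(x_1, \ldots, x_m)$, where $q_n$ denotes the probability mass function of the ordered partition produced by Algorithm~\ref{alg:exact}. The strategy mirrors the two-step recursion used in the proof of Proposition~\ref{prop:key}: first pin down the distribution of the size $Y_1$ of table~$1$, then show that conditioning on $Y_1 = x_1$ reduces the remainder to an independent CRP on $n - x_1$ customers. The base case $n = 1$ is immediate: customer~$1$ is forced to table~$1$, so $q_1(1) = 1 = p_1(1)$.

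For the inductive step, I would first verify that $P(Y_1 = x_1) = x_1/\binom{n+1}{2}$, matching (\ref{eqn:xi}). This is precisely the Polya-urn interpretation recorded just above Algorithm~\ref{alg:exact}: the acceptance probability $(x_1^{(k-1)}+1)/(k+1)$ at table~$1$ when customer~$k$ arrives is the Polya-urn draw with $w = 2,\, b = 1$, so $Y_1 - 1$ is the number of white balls added in $n - 1$ draws. Substituting into~(\ref{eqn:polya}) with $n$ replaced by $n - 1$ yields the claim.

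The main step is to show that, conditional on $Y_1 = x_1$, the residual partition $(Y_2, \ldots, Y_k)$ is distributed as the partition produced by a fresh CRP on $n - x_1$ customers. I would argue this by restricting Algorithm~\ref{alg:exact} to the subsequence of customers who \emph{decline} table~$1$, re-indexed in order of arrival. When the $j$-th such customer arrives, $j - 1 = k - 1 - x_1^{(k-1)}$ non-table-$1$ customers precede it; conditioning on the decline cancels the common factor $1 - (x_1^{(k-1)}+1)/(k+1)$ from every downstream probability, and the identity
$$ k - 1 - s_i^{(k-1)} + 2 \;=\; (j - 1) - \bigl(s_i^{(k-1)} - x_1^{(k-1)}\bigr) + 2, \qquad i \ge 2, $$
rewrites each remaining denominator as the CRP denominator for a restaurant on the ``reduced'' tables (the original tables $2, 3, \ldots$) with $j - 1$ prior customers. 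Since the sizes of tables $\ge 2$ change only when a non-table-$1$ customer arrives, the reduced state evolves exactly as a fresh CRP would, delivering the claim.

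Combining the two steps with the inductive hypothesis, and using the shift $n - x_1 - s_i^{\mathrm{new}} = n - s_i$ with $s_i^{\mathrm{new}} = s_i - x_1$, gives
$$ q_n(x_1, \ldots, x_m) \;=\; \frac{x_1}{\binom{n+1}{2}} \cdot p_{n - x_1}(x_2, \ldots, x_m) \;=\; \prod_{i=1}^m \frac{x_i}{\binom{n - s_i + 1}{2}} \;=\; p_n(x_1, \ldots, x_m), $$
closing the induction. The main obstacle will be the algebraic bookkeeping in the conditional step, but the denominators in Algorithm~\ref{alg:exact} are tuned precisely so that the Polya-urn prefactor cancels and the rest realigns with a smaller CRP; once the identity above is verified the conclusion follows mechanically.
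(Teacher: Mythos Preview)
Your proposal is correct and follows essentially the same route as the paper: identify the size of table~$1$ via the $(2,1)$ Polya urn (giving $\P(Y_1=x_1)=x_1/\binom{n+1}{2}$), then argue that the customers who decline table~$1$ evolve as a fresh copy of the same CRP on $n-x_1$ customers, and recurse. The paper compresses the recursive step to the phrase ``by construction,'' whereas you spell out explicitly why the conditional denominators $(k-1)-s_i^{(k-1)}+2$ realign with the reduced CRP denominators $(j-1)-(s_i^{(k-1)}-x_1^{(k-1)})+2$; this extra care is the only real difference.
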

\begin{proof}
The claim is evident from the Polya's urn construction. Let us keep track of $N_1$, the size of the first table in the CRP. After $n$ steps, $N_1-1$ is distributed as the number of newly added white balls in the $(2,1)$-Polya's urn scheme after $n-1$ steps. Thus, $N_1 \stackrel{d}{=} X_1$. Similarly, let $N_2$ be the size of the second table. By construction, $N_2$ is distributed as the number of white balls in the $(2,1)$-Polya's urn scheme after $n-N_1-1$ steps. Therefore, $N_2 \stackrel{d}{=} X_2$. Repeating this argument proves the claim.
\end{proof}

As $n \to \infty$, the sequence of relative frequencies $(X_i/n, i \geq 1)$ converges in distribution to the continuous \emph{$Beta(2,1)$ stick-breaking sequence}
\begin{equation}\label{eqn:pi}
(P_1, P_2, \ldots) = (B_1, \bar{B}_1B_2, \bar{B}_1\bar{B}_2B_3,\ldots),
\end{equation}
where $B_i$ are i.i.d. $Beta(2,1)$ random variables, and $\bar{B}_i = 1 - B_i$ for $i \geq 1$. This sequence defines a distribution on the relative frequencies of a random partition $\Pi$ of $\mathbb{N}$. Let $p: \mathbb{N}^\ast = \bigcup_{k=1}^\infty \mathbb{N}^k \to [0,1]$ be the partition function of $\Pi$. 
\begin{lem}
Let $\Pi'_n$ be the restriction of $\Pi$ to $[n]$. Then $\Pi_n(\mathcal{C}^-(i,C_i)) \stackrel{d}{=} \Pi_n'$.
\end{lem}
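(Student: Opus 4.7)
The plan is to construct $\Pi$ from the sticks $(P_1, P_2, \ldots)$ via a sequential seating rule that matches the Chinese restaurant process of Algorithm~\ref{alg:exact}, and then invoke Lemma~\ref{lem:crp.equals.pin}.

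First I would construct $\Pi$ as follows: given the i.i.d.\ $Beta(2,1)$ variates $B_1, B_2, \ldots$ and their associated sticks $(P_1, P_2, \ldots)$ from (\ref{eqn:pi}), seat customer $1$ at table $1$, and for $n \geq 1$, given that the first $n$ customers occupy tables $1, 2, \ldots, m$, assign customer $n+1$ to table $i$ with probability $P_i$ for $i \in \{1, \ldots, m\}$ and to a fresh table $m+1$ with probability $1 - \sum_{i=1}^m P_i$. Let $\Pi$ be the resulting partition of $\mathbb{N}$ and $\Pi_n'$ its restriction to $[n]$. Since tables open in consecutive order, labeling by order of appearance coincides with labeling by stick index; a law-of-large-numbers argument applied to the conditionally Bernoulli$(P_i)$ streams of table-$i$ indicators then verifies that the relative frequencies of $\Pi$ are indeed $(P_i)_{i \geq 1}$, matching the description given just before the lemma.

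The key step is to check that the one-step seating kernel of this construction matches Algorithm~\ref{alg:exact}. Conditionally on $P_1$, the indicators $\{\text{customer } k \text{ joins table } 1\}$ for $k = 2, \ldots, n$ are i.i.d.\ Bernoulli$(P_1)$, with exactly $x_1 - 1$ successes when $\Pi_n'$ has $x_1$ customers in table $1$. Beta--Bernoulli conjugacy applied to the prior $P_1 = B_1 \sim Beta(2,1)$ yields posterior $Beta(x_1+1, n-x_1+1)$, whose mean $(x_1+1)/(n+2)$ is precisely the CRP probability of seating customer $n+1$ at table $1$. Conditionally on not joining table $1$, the ratios $P_i/(1-P_1) = B_i$ for $i \geq 2$ are i.i.d.\ $Beta(2,1)$ and independent of $B_1$ by the stick-breaking definition, so the same argument applied recursively to the $n-x_1$ customers outside table $1$ produces the conditional probability $(x_2+1)/(n-x_1+2)$ for joining table $2$, and iteratively all of the remaining CRP probabilities.

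It will follow by induction on $n$ that $\Pi_n'$ and the CRP output of Algorithm~\ref{alg:exact} have the same joint distribution, and then Lemma~\ref{lem:crp.equals.pin} will close the argument. The main point requiring care is the recursive reset of the Beta sequence after conditioning on customers in table $1$; this depends on the independence and identical distribution of the $B_i$'s, which is built into the stick-breaking definition, so the obstacle is essentially bookkeeping rather than substantive.
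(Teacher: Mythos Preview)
Your approach is correct and genuinely different from the paper's. The paper simply invokes Pitman's formula~(\ref{eqn:pn.pi}) for the finite-dimensional marginals of a partially exchangeable partition with given frequencies and then evaluates the expectation directly to recover~(\ref{eqn:pn}); this is a one-line computation once the $Beta(2,1)$ moments are written down. You instead match the one-step seating kernel to Algorithm~\ref{alg:exact} via Beta--Bernoulli conjugacy and then appeal to Lemma~\ref{lem:crp.equals.pin}. Your route is longer but more explanatory: it makes clear \emph{why} the predictive probabilities take the form $(x_i+1)/(n-s_i+2)$, namely as posterior means under conjugate updating, whereas the paper's direct computation leaves this unexplained.

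Two points of care. First, the identity ``$P_i/(1-P_1)=B_i$ for $i\ge 2$'' is false for $i\ge 3$: one has $P_2/(1-P_1)=B_2$, but $P_3/(1-P_1)=(1-B_2)B_3$, and so on. The correct statement---which is what your recursion actually uses---is that $(P_i/(1-P_1))_{i\ge 2}$ is itself a $Beta(2,1)$ stick-breaking sequence driven by $(B_i)_{i\ge 2}$, independent of $B_1$. Second, for the posterior on $B_1$ given the full history (not merely the table-$1$ indicators) to reduce to $Beta(x_1+1,n-x_1+1)$, you need that the allocation of the $n-x_1$ customers among tables $2,\ldots,m$ is a function of $(B_i)_{i\ge 2}$ alone, hence independent of $B_1$. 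This is exactly the ``recursive reset'' you flag, and it follows from the stick-breaking factorisation just stated; making it explicit would close the only real gap.
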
 
\begin{proof}
By \cite{pitmanPPF}, the probability that $\Pi'_n$ equals any specific partition $(x_1, \ldots, x_m)$ of $[n]$, in order of appearance, is given by  
\begin{equation}\label{eqn:pn.pi}
 p_n(x_1, \ldots, x_m) = \mathbb{E}\left(\prod_{i=1}^mP_i^{x_i-1}\prod_{i=1}^{m-1}(1-\sum_{j=1}^iP_j)\right).
\end{equation}
By a direct computation, we find that (\ref{eqn:pn.pi}) equals (\ref{eqn:pn}). This proves the lemma.
\end{proof}
We call $\Pi_n(\mathcal{C}^-(i,C_i))$ the \emph{discrete $Beta(2,1)$ stick-breaking scheme}. It follows from \cite[Theorem 6]{pitmanPPF} that there is another CRP representation for $\Pi_n(\mathcal{C}^-(i,C_i))$, this time conditioned on the limiting sequence $(P_1, P_2, \ldots)$. Specifically, given $(P_1, P_2, \ldots)$, and given that the partition $\Pi_n(\mathcal{C}^-(i,C_i))$ has sizes $(x_1, \ldots, x_m)$, $\Pi_{n+1}(\mathcal{C}^-(i,C_i))$ is an extension of $\Pi_n(\mathcal{C}^-(i,C_i))$ in which the $(n+1)$-st customer does the following:
\begin{itemize}
	\item Joins table $i$ with probability $P_i$, $1 \leq i \leq m$
	\item Joins a new table with probability $1 - \sum_{j=1}^mP_j$.
\end{itemize}
In other words, conditioned on $(P_i, i \geq 1)$, for fixed $n \geq 1$, $X_1 \stackrel{d}{=} Binomial(n, P_1)$. Conditioned on $X_1$, $X_2 \stackrel{d}{=} Binomial(n-X_1, P_2)$, and so on.  When $P_i = \prod_{j<i}\bar{B}_jB_i$ for i.i.d. random variables $B_j \sim B$ as in our case,  this is also known as the \emph{Bernoulli sieve} model, a recursive allocation of $n$ balls in infinitely many boxes $j = 1, 2, \ldots$. Here $X_j$ is the number of balls in the $j$-th box (if a box $j$ is not discovered then $X_j = 0$). Gnedin \cite{gnedin} and Gnedin et. al. \cite{gnedin2} studied the various functionals of this model, including $|\Pi_n(\mathcal{C}^-(i,C_i))|$, the number of boxes occupied by at least one ball. Through methods from renewal theory, they showed that $|\Pi_n(\mathcal{C}^-(i,C_i))|$ has the same asymptotics as the number of renewals on the interval $[0, \log(n)]$ of a renewal process whose inter-arrival time is distributed as $-\log(B)$. Specifically, for $\mu = \E(-\log(B))$ and $\sigma^2 = Var(-\log(B))$,
\begin{equation}\label{eqn:gnedin}
\frac{|\Pi_n(\mathcal{C}^-(i,C_i))| - \mu^{-1}\log(n)}{\sqrt{\sigma^2\mu^{-3}\log(n)}} \stackrel{d}{\rightarrow} \N(0,1).
\end{equation}
In our case, $B \stackrel{d}{=} Beta(2,1)$, thus $\mu = \frac{3}{2}, \sigma^2 = \frac{5}{4}$. Substituting into (\ref{eqn:gnedin}) yields
\begin{equation}\label{eqn:pi.asymp}
\frac{|\Pi_n(\mathcal{C}^-(i,C_i))| - \frac{2}{3}\log(n)}{\sqrt{\frac{10}{27}\log(n)}} \stackrel{d}{\rightarrow} \N(0,1).
\end{equation}
We can now complete the proof of Theorem \ref{thm:main.rn} for the case $F = exponential(1)$. 
\begin{proof}[Proof of Theorem \ref{thm:main.rn} when $F$ is $exponential(1)$]
As previously argued, we have
$$Z_n = |\Pi_{U_n}(\mathcal{C}^-(i,C_i))| + |\Pi'_{n-U_n}(\mathcal{C}^-(i,C_i))| $$
where $U_n$ is the discrete uniform distribution on $\{0, 1, \ldots, n\}$, and conditioned on $U_n = u$, $|\Pi_u(\mathcal{C}^-(i,C_i))|$ and $|\Pi'_{n-u}(\mathcal{C}^-(i,C_i))|$ are independent, with asymptotics given in (\ref{eqn:pi.asymp}). Thus $Z_n$, appropriately scaled, also converges to the standard normal distribution. It remains to find $\E(Z_n)$ and $\Var(Z_n)$. Couple $U_n$ with a continuous uniform $U \stackrel{d}{=} Uniform(0,1)$ in the obvious way. Then $|U_n/n - U| < \frac{1}{n}$ a.s. Therefore,
\begin{align*} 
\E(Z_n|U=u) &= \frac{2}{3}\left(\log(nu) + \log(n(1-u))\right) + O(1) \\
 &= \frac{2}{3}\log(n) + \frac{2}{3}(\log(u) + \log(1-u)) + O(1).
 \end{align*}
Therefore $\E(Z_n) = \frac{2}{3}\log(n) + O(1).$ Similarly,
\begin{align*}
\Var(Z_n|U=u) 
&= \frac{10}{27}\left(\log(nu) + \log(n(1-u))\right) + O(1) \\
&= \frac{20}{27}\log(n) + \frac{10}{27}(\log(u) + \log(1-u)) + O(1).
\end{align*}
By the above calculation, $\E(\Var(Z_n|U)) = \frac{20}{27}\log(n) + O(1)$, and
$$\Var(\E(Z_n|U)) = \Var(\frac{2}{3}(\log(u) + \log(1-u))) = O(1).$$
Thus, $\Var(Z_n) = \frac{20}{27}\log(n) + O(1)$. The central limit theorem for $Z_n$ follows by the Continuous Mapping Theorem (see, for example, \cite[\S 2]{durrett}).
\end{proof}

For any $a > 0$, we can define a discrete $Beta(2,a)$ stick-breaking process. One could ask whether there exists some $F$ such that $\Pi_n$ a partition from such a process. In the hindsight of the proof of Proposition \ref{thm:ppp}, such a process could arise when $F$ is the $gamma(a,b)$ distribution for some $b > 0$. However, 
Proposition \ref{prop:key} relies on the lack of memory of exponentials to obtain the recursion. We do not know how to generalize to other distributions in the gamma family. 

\section{Poisson proof for the exponential case}\label{sec:simple.poisson}

We now prove Proposition \ref{thm:ppp} when $F$ is $exponential(1)$. Here $G$ is the Lebesgue measure on $(0,\infty)$, $\Phi_n$ is a homogeneous Poisson point process on the first quadrant with rate $n$, and the rectangle in consideration is the square $(0,1)^2$. As discussed in the introduction, in this case, Proposition \ref{thm:ppp} is a version of Groeneboom's result \cite{groenboom88}. 

We now give an elementary proof of this result using stick-breaking. The key idea of this proof, Lemma \ref{lem:key} below, appeared as Corollary 2 in a later paper of Groeneboom \cite{groenboom12}. However, he did not make the connection to stick-breaking and renewal theorems explicit. Our proof clarifies the connection between the discrete and continuous setup, a key idea behind our main theorems. We discuss this in Section \ref{sec:connections}, together with connections to the results of Buchta \cite{buchta, buchta2} and Groeneboom \cite{groenboom88, groenboom12}. 

We first consider the lower convex hull $\mathcal{C}^+(\Phi_n)$ only. At the end of Section \ref{sec:intro} we introduced the process $(x(\alpha),y(\alpha)), \alpha \in (0,\infty)$ of vertices of $\mathcal{C}^+(\Phi_n)$ indexed by the slope of their support vectors. Note that $(y(\alpha), \alpha \in (0, \infty))$ is a pure-jump process indexed by increasing $\alpha$. The sequence of vertices of this process is precisely the sequence $(V_i^\downarrow, i \geq 0)$ of vertices of $\mathcal{C}^+(\Phi_n)$ ordered by decreasing $y$-values. The number of jumps is precisely $|\mathcal{C}^+(\Phi_n)|$. Groeneboom \cite{groenboom88} proved the following.

\begin{lem}[\cite{groenboom88}]
The process $((x(\alpha),y(\alpha)), \alpha \in (0,\infty)$ is a pure jump Markov process indexed by $\alpha$. Its number of jumps on $(0,\infty)$, $|\mathcal{C}^+(\Phi_n)|$, is a.s. finite.
\end{lem}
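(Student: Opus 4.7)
The plan is to establish the two claims separately. The finiteness of $|\mathcal{C}^+(\Phi_n)|$ is immediate: the restriction $\Phi_n \cap (0,1)^2$ has a Poisson$(n)$-distributed cardinality and is therefore almost surely a finite point set, whose convex hull has finitely many faces. Similarly, the pure-jump structure follows from the finiteness of the vertex set of $\mathcal{C}^+(\Phi_n)$: as $\alpha$ increases continuously in $(0,\infty)$, the support line $L(\alpha)$ rotates continuously, and the vertex it supports can change only at the finitely many $\alpha$ for which $L(\alpha)$ contains a full edge of $\mathcal{C}^+(\Phi_n)$. Between those critical angles, $(x(\alpha),y(\alpha))$ is constant, and the minimum-$y$ tiebreaking convention makes it well-defined at the critical values, yielding a piecewise constant trajectory with finitely many jumps.

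For the Markov property, I would introduce the natural filtration $\mathcal{F}_\alpha = \sigma\{(x(\beta),y(\beta)) : \beta \leq \alpha\}$ and show that, conditional on $V := (x(\alpha),y(\alpha))$, the future of the process is independent of $\mathcal{F}_\alpha$. The geometric point is that the next vertex visited is determined by the points of $\Phi_n$ in a specific \emph{search region} $R(V,\alpha)\subset(0,1)^2$, namely the region adjacent to $V$ on the far side of the current support line that can still host a vertex of $\mathcal{C}^+(\Phi_n)$; among the points of $\Phi_n \cap R(V,\alpha)$, the next vertex is characterized by the smallest $\alpha' > \alpha$ at which $L(\alpha')$ picks up a new vertex of $\mathcal{C}^+(\Phi_n)$. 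The history $\mathcal{F}_\alpha$, on the other hand, records the locations of the previously visited vertices $V_0^\downarrow, \ldots, V_i^\downarrow = V$ together with the information that $\Phi_n$ places no points in the \emph{explored region} below the polygonal chain $V_0^\downarrow \to \cdots \to V_i^\downarrow$. By construction the explored region is disjoint from $R(V,\alpha)$, so the defining independence property of the PPP on disjoint Borel sets implies that $\Phi_n$ restricted to $R(V,\alpha)$ is a homogeneous PPP of rate $n$ on $R(V,\alpha)$, independent of $\mathcal{F}_\alpha$. Consequently the conditional law of $((x(\beta),y(\beta)))_{\beta > \alpha}$ given $\mathcal{F}_\alpha$ depends on $\mathcal{F}_\alpha$ only through $(V,\alpha)$, which is the desired Markov property.

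The main obstacle will be the careful geometric bookkeeping: identifying the search region $R(V,\alpha)$, verifying that the explored region is determined measurably by $\mathcal{F}_\alpha$, and checking that these two regions are disjoint. Once this decomposition is in place, the transition kernel of the Markov process can be written explicitly as an integral of the PPP intensity $n\,d\lambda$ over $R(V,\alpha)$, and the almost sure finiteness of the total number of jumps of the process on $(0,\infty)$ coincides with the finiteness of $|\mathcal{C}^+(\Phi_n)|$ already established.
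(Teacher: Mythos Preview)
The paper does not prove this lemma; it is quoted from Groeneboom's 1988 paper without argument, so there is no ``paper's proof'' to compare against. Your sketch is the standard route and is correct: finiteness is immediate from $|\Phi_n\cap(0,1)^2|\sim\mathrm{Poisson}(n)<\infty$ a.s., the pure-jump structure follows because the supporting vertex of a finite point set changes only at finitely many slopes, and the Markov property follows from the spatial independence of the PPP on disjoint Borel sets once one verifies that the region determining the future trajectory is disjoint from the region already explored by the past. Your identification of the main obstacle---making the search region $R(V,\alpha)$ and the explored region explicit and checking their disjointness---is exactly the content of the argument; concretely, one may take $R(V,\alpha)=\{(x,y)\in(0,1)^2:\ y<y(V),\ x+\alpha y\ge x(V)+\alpha y(V)\}$, which is visibly disjoint both from the half-plane below $L(\alpha)$ (known empty) and from $\{y>y(V)\}$ (where the past vertices sit).
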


The infinitesimal generator for the process $(x(\cdot),y(\cdot))$ is rather complicated, see \cite{groenboom88}. Remarkably, the transition rule in the $y$-coordinate is very simple. Groeneboom discovered this fact in his more recent paper \cite{groenboom12}. 

\begin{lem}[\cite{groenboom88}]\label{lem:key}
Let $(B_i, i \geq 1)$ be an infinite sequence of i.i.d. $Beta(1,2)$random variables, independent of $y(V_0^\downarrow)$. The transition rule in $y(\cdot)$ is as follows. Given that $V^\downarrow_i = (x^\ast,y^\ast)$ for some $i \geq 0$, the process $y(\cdot)$ either terminates with probability $\exp(-n\frac{(1-x^\ast)y^\ast}2)$, in which case $i = |\mathcal{C}^+(\Phi_n)|$, or it jumps to a random point $y(V^\downarrow_{i+1})$, $0 < y(V^\downarrow_{i+1}) < y^\ast$, where
$$ y(V^\downarrow_{i+1}) \stackrel{d}{=} y^\ast B_{i+1}. $$
In other words, $y(V^\downarrow_0) \stackrel{d}{=} Uniform(0,1)$ and for all $i \le |\mathcal{C}^+(\Phi_n)|$, $ y(V_i) = y(V_0)\prod_{j=1}^iB_j.$
\end{lem}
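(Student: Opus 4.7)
The plan is to condition on $V_i^\downarrow=(x^\ast,y^\ast)$ and use the preceding Markov-property lemma to reduce everything to a one-step transition computation. For the initial distribution, $V_0^\downarrow$ is the leftmost point of $\Phi_n$ in $(0,1)^2$, whose $y$-coordinate is uniform on $(0,1)$ by independence of the coordinates of a rate-$n$ homogeneous PPP; this gives $y(V_0^\downarrow)\stackrel{d}{=}\mathrm{Uniform}(0,1)$ immediately.

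For the one-step transition, note that $V_{i+1}^\downarrow$ is characterized as the point of $\Phi_n\cap(x^\ast,1)\times(0,y^\ast)$ maximising the descent rate $W(X,Y):=(y^\ast-Y)/(X-x^\ast)$. The key geometric observation, essentially Corollary~2 of \cite{groenboom12}, is that the diagonal from $(x^\ast,y^\ast)$ to $(1,0)$ splits the subrectangle into two triangles of equal area $(1-x^\ast)y^\ast/2$, and it separates descent rates cleanly: any point strictly below the diagonal has $W>y^\ast/(1-x^\ast)$, any point above has $W<y^\ast/(1-x^\ast)$. Hence the maximum-$W$ point lies in the lower triangle $T=\{(X,Y):x^\ast<X<1,\ 0<Y<y^\ast(1-X)/(1-x^\ast)\}$ if and only if $\Phi_n\cap T\neq\emptyset$, an event of probability $1-\exp(-n(1-x^\ast)y^\ast/2)$; its complement is the lemma's ``termination''.

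On the non-termination event, a standard Palm-type calculation gives the joint density $f(X,Y)=n\exp(-n(y^\ast)^2/(2W(X,Y)))$ on $T$: the factor $n$ accounts for $(X,Y)\in\Phi_n$, and the exponent is the Poisson probability that no point of $\Phi_n$ lies in the steeper sub-triangle $\{W'>W(X,Y)\}$, which sits entirely inside $T$ and has area $(y^\ast)^2/(2W(X,Y))$. Switching to normalized coordinates $\xi=(X-x^\ast)/(1-x^\ast)$ and $\eta=Y/y^\ast$, the triangle $T$ becomes the simplex $\{\xi+\eta\leq 1\}$, and with $M=n(1-x^\ast)y^\ast/2$ the joint density simplifies to $2M\exp(-M\xi/(1-\eta))$. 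Integrating over $\xi\in(0,1-\eta)$ yields the marginal $2(1-\eta)(1-e^{-M})$, and dividing by the non-termination probability $1-e^{-M}$ leaves the conditional density $2(1-\eta)$, which is exactly the density of $\mathrm{Beta}(1,2)$ and depends on neither $x^\ast$ nor $n$. Independence of the $B_{j+1}$'s across $j$, and from $y(V_0^\downarrow)$, then follows from the strong Markov property, since each $B_{j+1}$ is a functional of the PPP restricted to the triangle $T(V_j^\downarrow)$.

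The main obstacle is pinpointing $T$ rather than the full subrectangle as the region governing the clean $\mathrm{Beta}(1,2)$ jumps: the factor $1/2$ in both the area of $T$ and in the exponent of the termination probability is the geometric consequence of separating descent rates across the diagonal. Once this separation is observed the marginalization is routine; the substantive point is recognizing that transitions landing in the upper complementary triangle must be absorbed into the terminal event, which is encoded in the identity $\{V_{i+1}^\downarrow\in T\}=\{\Phi_n\cap T\neq\emptyset\}$.
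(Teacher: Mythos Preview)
Your computation is correct and your route differs from the paper's. The paper argues via an infinitesimal rotating-sector picture: as $\alpha$ increases by $d\alpha$ the supporting line through $(x^\ast,y^\ast)$ sweeps out a thin triangle, and conditioned on $V_{i+1}^\downarrow$ lying in it the point is uniform, so the density of its $y$-coordinate is proportional to the sector width $y^\ast-y$; since this is claimed to be the same for every $\alpha>0$, the unconditional ratio is $\mathrm{Beta}(1,2)$. You instead compute the joint Palm density of the maximum-$W$ point on $T$ and marginalize in $\xi$. Your approach has the merit of making the diagonal explicit and of tying the stated termination probability $\exp(-n(1-x^\ast)y^\ast/2)$ precisely to the event $\{\Phi_n\cap T=\emptyset\}$.

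There is, however, a gap in your handling of termination, and it is one the paper's own sketch shares. Your ``terminal event'' is $\{\Phi_n\cap T=\emptyset\}$, but the clause ``$i=|\mathcal{C}^+(\Phi_n)|$'' in the lemma means the full rectangle $R=(x^\ast,1)\times(0,y^\ast)$ is empty, which has probability $\exp(-n(1-x^\ast)y^\ast)$, not $\exp(-n(1-x^\ast)y^\ast/2)$. When $T$ is empty but $R\setminus T$ contains points, the process $y(\cdot)$ does \emph{not} stop: $V_{i+1}^\downarrow$ lands in the upper triangle, and on that event the conditional law of $y(V_{i+1}^\downarrow)/y^\ast$ is a Beta truncated away from $0$, not $\mathrm{Beta}(1,2)$. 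Your remark that such transitions ``must be absorbed into the terminal event'' is a redefinition, not a proof that the two notions of termination coincide. The paper's assertion that the sector width is ``independent of $\alpha$'' has exactly the same defect: once $\alpha>(1-x^\ast)/y^\ast$ the sweeping line exits through $x=1$ and the sector is clipped. For the downstream Lemma~\ref{lem:approx} this discrepancy contributes at most $\mathcal O_P(1)$ extra vertices and is harmless, but as a proof of the lemma \emph{as written} neither argument quite closes.
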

\begin{proof} The proof is by induction on $i$. Note that $y(V^\downarrow_0)$ is distributed as a $Uniform(0,1)$. Almost surely, $y(V^\downarrow_0) = y(\alpha_0)$ for some $\alpha_0 > 0$. As $\alpha$ increases, the event that $y(\cdot)$ jumps from $y(V^\downarrow_0)$ to $y(V^\downarrow_1)$ in $(\alpha, \alpha+d\alpha)$ is the event that there is at least one point in the shaded triangle in Figure \ref{fig:small.triangle}.
\begin{center}
\begin{figure}[h]
\includegraphics{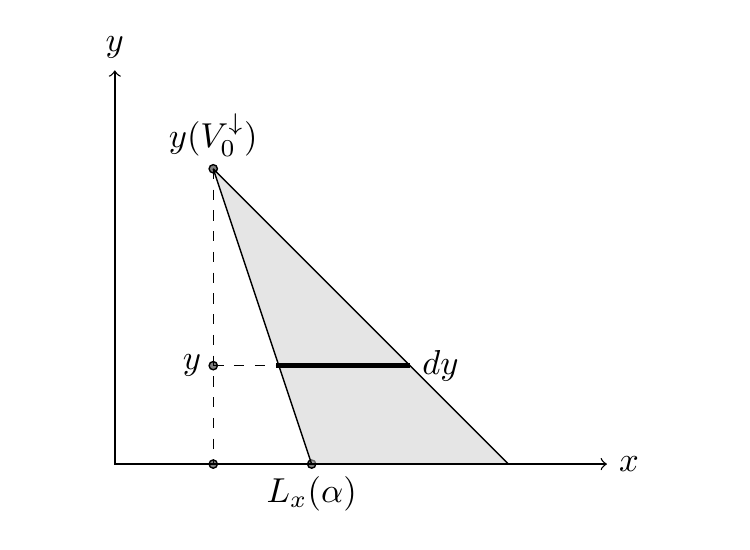}
\vskip-0.5cm
\caption{The process $y(\cdot)$ has a jump in $d\alpha$ if there is at least one point in the triangle. Conditioned on its existence, this point is uniformly distributed. \protect\label{fig:small.triangle}}
\end{figure}
\end{center}
Conditioned on $V^\downarrow_1$ being in this triangle, it is uniformly distributed. In particular, $\P(y(V^\downarrow_1)\in dy)$ is proportional to the length of the line highlighted, which is exactly proportional to $y(V^\downarrow_0) - y$. Note that this is independent of $\alpha$, as long as $\alpha > 0$. Thus $B_1 \stackrel{d}{=} Beta(1,2)$, and is independent of $y(V^\downarrow_0)$. Repeating this argument shows that $B_i$ as defined is distributed as $Beta(2,1)$, independent of $y(V^\downarrow_{i-1})$ and thus of other $B_j$'s, as long as $\alpha > 0$. The event of termination at some point $(x^\ast,y^\ast)$ is exactly when the point process $\Phi_n$ has no points in the rectangle with vertices $(x^\ast,y^\ast)$, $(1,y^\ast)$, $(0,y^\ast)$ and $(1,0)$. The probability of this event is $exp(-n(1-x^\ast)y^\ast/2)$. 
\end{proof}

Lemma \ref{lem:key} explains the appearance of $Beta(1,2)$: $y(V^\downarrow_0) - y(V^\downarrow_1)$ is a \emph{size-biased} pick from the $Uniform(0,y(V^\downarrow_0))$ distribution. On the phenomenon of size-bias in random combinatorial structures, see \cite{arratia, pitman.tran}). In fact, the discrete $Beta(1,2)$ appearing in Proposition \ref{prop:key} is also a size-bias phenomenon. We make the connection between Lemma \ref{lem:key} and Proposition \ref{prop:key} explicit in Section \ref{sec:connections}.

Excluding the stopping rule, $y(\cdot)$ is a discrete time Markov chain with some internal filtration. At each step, it terminates with some stopping rule that depends on the bigger filtration generated by the Markov process $(x(\cdot),y(\cdot))$. To avoid dealing with the bigger process, we couple $y(\cdot)$ with the Markov chain $(y'_i, i \geq 0)$, by taking $ y'_0 =y(V^\downarrow_0)$ and $y'_i = y'_0\prod_{j=1}^iB_j$, for the same sequence $(B_i, i \geq 1)$ of i.i.d. $Beta(2,1)$ random variables appeared in Lemma \ref{lem:key}. For any $t > 0$, let $J'(t)$ be the first time $i$ when $y'_i < e^{-t}$. Clearly $J'(t)$ is a stopping time for the Markov chain $(y'_i, i \geq 0)$. In fact, it is the number of renewals on $[0, t]$ of a delayed renewal process with i.i.d inter-arrival distribution $-\log(Beta(1,2))$, and first point distributed as $-\log(Uniform(0,1))$. As $t \to \infty$, by \cite[Theorem 2.5.1]{gut},
\begin{equation}\label{eqn:key.renewal}
\frac{J'(t)}{t} \stackrel{a.s.}{\to} \frac{1}{\mu},
\end{equation}
where $\mu = \E(-\log(Beta(1,2))) = \frac{3}{2}$.

The point of the coupling is that $J'(\log(n))$ (defined w.r.t. $(y'_i,i\ge 0))$ and $|\mathcal{C}^+(\Phi_n)|$ (defined w.r.t. the initial process) are close with high probability. Indeed, these two quantities are closed as long as $y(V_0^\uparrow) = y\left(V_{|\mathcal{C}^+(\Phi_n)|}^\downarrow\right)$ is close to $\frac{1}{n}$.

\begin{lem}\label{lem:approx}
Let $\delta(n) = |\mathcal{C}^+(\Phi_n)| - J'(\log(n))$. Then $\delta(n) = \mathcal{O}_P(1)$.
\end{lem}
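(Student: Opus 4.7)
The plan is to use the coupling of Lemma~\ref{lem:key} to rewrite $|\mathcal{C}^+(\Phi_n)|$ as a first-passage level of the chain $(y'_i)$, and then to control that level via the renewal theorem after establishing that the terminal $y$-value of the convex-hull process is concentrated around $1/n$. Let $\kappa := |\mathcal{C}^+(\Phi_n)|$ for brevity. Because each $B_j \in (0,1)$ a.s., the sequence $(y'_i)$ is strictly decreasing, and the coupling identity $y'_\kappa = y(V_\kappa^\downarrow)$ forces the smallest index $i$ with $y'_i < y(V_\kappa^\downarrow)$ to be exactly $\kappa+1$. Equivalently,
\[
\kappa + 1 \;=\; J'\bigl(-\log y(V_\kappa^\downarrow)\bigr).
\]

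The key geometric step is the identification of $V_\kappa^\downarrow = V_0^\uparrow$ with the point of $\Phi_n \cap (0,1)^2$ of minimum $y$-coordinate: the lower endpoint of $\mathcal{C}^+(\Phi_n)$ is the vertex where $\mathcal{C}^+$ meets $\mathcal{C}^-$, which is the lowest vertex of the entire lower convex hull, hence the global $y$-minimum of $\Phi_n$ inside the box. From the rate-$n$ Poisson structure,
\[
\P(y(V_\kappa^\downarrow) > t) \;=\; \P\bigl(\Phi_n((0,1)\times(0,t))=0\bigr) \;=\; e^{-nt}, \qquad t \in (0,1),
\]
so $n\cdot y(V_\kappa^\downarrow)$ is tight in both tails and $\xi_n := -\log(n\cdot y(V_\kappa^\downarrow)) = \mathcal{O}_P(1)$.

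Combining these ingredients,
\[
\delta(n) \;=\; \kappa - J'(\log n) \;=\; J'(\log n + \xi_n) - J'(\log n) - 1,
\]
so it remains to control the signed number of renewals of $(-\log y'_i)_{i \ge 0}$ in a window of bounded length around $\log n$. The latter is a non-lattice renewal process with i.i.d.\ positive inter-arrival distribution $-\log\mathrm{Beta}(1,2)$, of finite mean $\mu = 3/2$ and finite variance; its renewal function $U$ satisfies $U(s) = s/\mu + O(1)$. Hence for any $b_n \to \infty$,
\[
\E\bigl[J'(\log n + b_n) - J'(\log n - b_n)\bigr] \;\le\; 2b_n/\mu + O(1).
\]
For any $c_n \to \infty$, picking $b_n \to \infty$ with $b_n = o(c_n)$ and splitting $\P(|\delta(n)| > c_n)$ on $\{|\xi_n| \le b_n\}$, one bounds the first piece using $\xi_n = \mathcal{O}_P(1)$ and the second by Markov's inequality applied to the renewal count; both tend to $0$, giving $\delta(n) = \mathcal{O}_P(1)$.

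The hardest step is the geometric identification of $V_\kappa^\downarrow$ with the global $y$-minimum of $\Phi_n$ inside the box: one must verify that the $\alpha$-traversal of $\mathcal{C}^+$ in Lemma~\ref{lem:key} terminates precisely at the minimum vertex of the lower hull (handling also the negligible event that $\mathcal{C}^+$ is empty). Once that is in place, the remainder is a direct renewal-theoretic computation on the chain $(y'_i)$ furnished by the coupling.
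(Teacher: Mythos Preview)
Your proof is correct and follows essentially the same route as the paper: both arguments identify $V^\downarrow_{|\mathcal{C}^+(\Phi_n)|}=V^\uparrow_0$ with the global $y$-minimum of $\Phi_n$ in the box, show that $-\log y(V^\uparrow_0)-\log n=\mathcal{O}_P(1)$, and then transfer this via renewal theory to $J'$. The only cosmetic differences are that the paper computes the law of the minimum through the strip decomposition (obtaining an $\mathrm{exponential}(n)$ variable) whereas you use the void probability directly, and that you make the renewal step explicit via the renewal function and Markov's inequality where the paper invokes~(\ref{eqn:key.renewal}) and the strong law more informally.
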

\begin{proof}
We have to prove that for all
functions $\ell$ growing to infinity arbitrarily slowly,
$$ \P \left\{ J'(\log(n)) -\ell(n)\le |\mathcal{C}^+(\Phi_n)| \le  J'(\log(n)) +\ell(n) \right\} \to 1,$$ 
when $n\to \infty$. By~(\ref{eqn:key.renewal}) and the strong law of large numbers,
it is sufficient to show that for all $\ell$ as above,
\begin{equation}\label{eqn:bound.ymn}
\P\left\{ \log(n) -\ell(n)\le -\log(y(V^\uparrow_0))
\le  \log(n) +\ell(n)\right\} \to 1,
\end{equation}
when $n\to \infty$. 
We now prove (\ref{eqn:bound.ymn}) by using
the fact that $y(V^\uparrow_0)$ is the minimum of the
$y$-coordinates of the points of $\Phi_n$ in $(0,1)^2$. For this, we divide the region $(0,1) \times (0,\infty)$ into $n$ vertical strips $(i/n,(i+1)/n) \times (0,\infty)$ for $i = 0, \ldots, n-1)$. In each strip, the $y$-coordinate of the points of $\Phi_n$ form a homogeneous Poisson point process on $(0,\infty)$. In particular, in the $i$-th strip, the first $y$ jump, denoted by $\epsilon_i$, is distributed as a standard exponential. Let $Y' := \min(\epsilon_1, \ldots, \epsilon_n) \stackrel{d}{=} exponential(n)$. Then, conditioned on the event $Y' < 1$, which happens with probability $1 - e^{-n}$, $Y' = y(V^\uparrow_0)$. Now, 
$$\P(\frac{1}{n\ell(n)} \leq Y' \leq \frac{\ell(n)}{n}) = e^{-1/\ell(n)} - e^{-\ell(n)}.$$ 
Thus the probability of the event in (\ref{eqn:bound.ymn}) is at least
$$(1 - e^{-n})(e^{-1/\ell(n)} - e^{-\ell(n)}),$$
which concludes the proof.
\end{proof}
\begin{proof}[Proof of Proposition \ref{thm:ppp} when $F$ is exponential]
Write
$$ |\mathcal{C}(\Phi_n)| = |\mathcal{C}^+(\Phi_n)| + |\mathcal{C}^-(\Phi_n)| = J'(\log(n)) + J''(\log(n)) + \delta^+(n) + \delta^-(n). $$
Here $J'$ and $J''$ are stopping times of independent versions of the Markov chain $y'$ previously defined, and the terms $\delta^+(n), \delta^-(n)$ are error terms. Conditioned on the minimum $y$-coordinate $y(V_0^\uparrow)$, the pair $|\mathcal{C}^+(\Phi_n)|$ and $|\mathcal{C}^-(\Phi_n)|$, hence the pair $\delta^+(n)$ and $\delta^-(n)$, are independent. By (\ref{eqn:bound.ymn}), $\delta^+(n)$ and $\delta^-(n)$ are both $\mathcal{O}_P(1)$, therefore so is their sum. So
$$ |\mathcal{C}(\Phi_n)| = J'(\log(n)) + J''(\log(n)) + \mathcal{O}_P(1). $$
The asymptotics of $J'(\log(n))$ is the same as that of $|\Pi_n(\mathcal{C}^-(i,C_i))|$ in (\ref{eqn:pi.asymp}). Thus Proposition \ref{thm:ppp} follows by the Continuous Mapping Theorem.
\end{proof}

\section{Connections between the discrete and continuous setup}\label{sec:connections}
For general distribution $F$, we present a coupling argument in Proposition
\ref{lem:approx} to compare Theorem \ref{thm:main.rn} and Proposition \ref{thm:ppp}. When $F = exponential(1)$, $G = \lambda$, the coupling can be described even more explicitly, and in fact, it results in an equality in distribution. 

\begin{lem}\label{lem:connections} For $i = 0, 1, \ldots, n$, let $U_i$ be i.i.d $Uniform(0,1)$, independent of the $C_i$'s. Then
\begin{equation}\label{eqn:uici}
\Pi_n(\mathcal{C}(U_i,C_i)) \stackrel{d}{=} \Pi_n(\mathcal{C}(i,C_i)),
\end{equation}
and conditioned on $\Phi_n$ having $n+1$ points in $(0,1)^2$,
\begin{equation}\label{eqn:uiphi}
\Pi(\mathcal{C}(U_i,C_i)) \stackrel{d}{=} \Pi(\mathcal{C}(\Phi_n)).
\end{equation}
\end{lem}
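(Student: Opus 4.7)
The plan is to establish each equality by adapting the two tools developed for the exponential case: the stick-breaking recursion of Proposition~\ref{prop:key} for \eqref{eqn:uici}, and the Markov-chain description of Lemma~\ref{lem:key} for \eqref{eqn:uiphi}. Together these assertions say that in the exponential case the law of the partition associated with the lower convex hull is preserved under two successive randomizations of the $x$-coordinates: first replacing the deterministic positions $0,1,\ldots,n$ by i.i.d.\ uniform positions $U_0,\ldots,U_n$, and then replacing the resulting $(n+1)$-sample by a conditioned Poisson configuration on $(0,1)^2$.

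For \eqref{eqn:uici} I would argue by induction on $n$, mirroring the proof of Proposition~\ref{prop:key}. First reveal the vertex $V_0^\uparrow$ of minimum $C$-value: since the $C_i$ are i.i.d.\ and independent of the $U_i$, its $U$-rank $K$ is uniform on $\{0,1,\ldots,n\}$, just as its $i$-index was in $\mathcal{C}(i,C_i)$. Conditioning on $K=k$ and on $C_{V_0^\uparrow}$, memorylessness of the exponential makes the remaining $C$-values i.i.d.\ $\exp(1)$ shifted by $C_{V_0^\uparrow}$, and, conditional on $U_{(k)}$, the $U$-order statistics of rank $<k$ and $>k$ become independent order statistics of smaller i.i.d.\ uniform samples on $(0,U_{(k)})$ and $(U_{(k)},1)$. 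This splits the problem into independent sub-instances of $\mathcal{C}^+(U_i,C_i)$ and $\mathcal{C}^-(U_i,C_i)$, each anchored at one endpoint; the induction hypothesis matches them in law with $\mathcal{C}^+(i,C_i)$ and $\mathcal{C}^-(i,C_i)$, and summing over $K$ yields the recursion verified for $\mathcal{C}(i,C_i)$ in Proposition~\ref{prop:key}.

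For \eqref{eqn:uiphi} the plan is to replay Lemma~\ref{lem:key}'s geometric argument on $(U_i,C_i)$. Conditioning $\Phi_n$ on having $n+1$ points in $(0,1)^2$ makes those points $n+1$ i.i.d.\ uniform, so the task is to compare the $x$-gap partition of the lower convex hull of $n+1$ i.i.d.\ uniforms on $(0,1)^2$ with that of $n+1$ i.i.d.\ $(U,C)$ samples. I would run the jump chain of Lemma~\ref{lem:key} on $(U_i,C_i)$ and verify two ingredients: (i) $y(V_0^\downarrow)$ has the same law as in the Poisson case, and (ii) the multiplicative ratios $y(V_{i+1}^\downarrow)/y(V_i^\downarrow)$ are still i.i.d.\ $\mathrm{Beta}(1,2)$. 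The geometric identification of the ``next vertex'' inside the triangle of Figure~\ref{fig:small.triangle} is re-derived from the joint density of $(U,C)$; combined with the matching termination rule, this gives equality in distribution of the full sequence of $x$-coordinates of the vertices, hence of $\Pi$.

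The main obstacle I anticipate is \eqref{eqn:uiphi}: the marginal law of $y$-coordinates in the two configurations ($\exp(1)$ versus $U(0,1)$) and the corresponding termination probability $\exp\bigl(-n(1-x^\ast)y^\ast/2\bigr)$ of Lemma~\ref{lem:key} are both distribution-sensitive, so the equality is not immediate --- rather, a different initial distribution for $V_0^\downarrow$ and a different per-step termination rule must combine to reproduce exactly the same $x$-gap partition. Isolating this cancellation, which is what makes the identity exact in distribution rather than merely asymptotic as in Section~\ref{sec:coupling}, is where I expect most of the technical work to lie; a clean way to organize it would be to write both setups as Poisson processes with appropriate intensities on $(0,1)\times(0,\infty)$ and trace through the jump chain at this unified level.
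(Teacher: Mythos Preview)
Your plan for \eqref{eqn:uici} is exactly what the paper does: its proof is the single sentence ``follows from repeating the proof of Proposition~\ref{prop:key} for the points $(U_i,C_i)$ instead of $(i,C_i)$,'' and your inductive sketch is the natural unpacking of that sentence.

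For \eqref{eqn:uiphi} the paper takes a much shorter route than you propose, and the ``main obstacle'' you anticipate is exactly what it sidesteps. Rather than re-running the Markov chain of Lemma~\ref{lem:key} on the $(U_i,C_i)$ sample and trying to match up initial laws, $Beta(1,2)$ transition kernels, and termination probabilities, the paper argues by a direct representation: conditioned on $\Phi_n$ having $n+1$ points in $(0,1)^2$, these points are written as $\bigl(U_i,\, C_i/(\sum_j C_j+\epsilon)\bigr)$ with $\epsilon$ an independent $\exp(1)$ --- that is, as the very configuration $(U_i,C_i)$ after dividing all $y$-coordinates by one common random scalar. Since the lower convex hull, and hence the $x$-gap partition $\Pi$, is invariant under any global positive rescaling of the $y$-axis, \eqref{eqn:uiphi} follows in one line. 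Your closing instinct (``write both setups as Poisson processes with appropriate intensities on $(0,1)\times(0,\infty)$'') is pointing in this direction, but you stop short of observing that the two configurations differ only by a $y$-scale factor; once you see that, the jump-chain comparison and the delicate cancellation you worry about become unnecessary.
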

\begin{proof}
Equation (\ref{eqn:uici}) follows from repeating the proof of Proposition \ref{prop:key} for the points $(U_i,C_i)$ instead of $(i,C_i)$. For the second statement, note that conditioned on $\Phi_n$ having $n+1$ points in $(0,1)^2$, these points are distributed as $(U_i,\frac{C_i}{\sum_iC_i+\epsilon})$ for some independent $\epsilon \stackrel{d}{=} exponential(1)$. In other words, these points is a version of the points $(U_i,C_i)$, rescaled by some random amount in the $y$-axis. But such rescaling does not change the partition of $[0,1]$, thus we have (\ref{eqn:uiphi}).
\end{proof}

\subsection*{Lower convex hull of points in a triangle}
Buchta \cite{buchta} considered $|\mathcal{C}(\Delta)|$, the number of faces in the lower convex hull of $(0,1)$, $(1,0)$ and $n$ points distributed uniformly at random in the triangle $\Delta$ with vertices $(0,1)$, $(0,0)$ and $(1,0)$. Theorem 1 in \cite{buchta} derives a recurrence relation for $\P(|\mathcal{C}(\Delta)| = k)$, which is exactly our $p_k^n$ in 
(\ref{eqn:pkn}). In the light of Lemmas \ref{lem:key} and \ref{lem:connections}, this connection is clear.

\begin{cor}
The partition of $[0,1]$ induced by projecting the lower faces of $\mathcal{C}(\Delta)$ onto the $x$-axis is precisely the partition of $[0,1]$ obtained by projecting faces of $\mathcal{C}^+(\Phi_n)$ onto $[0,y(V^\downarrow_0)]$ in the $y$-axis, and rescaled by $y(V^\downarrow_0)$.
\end{cor}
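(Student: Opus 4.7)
The plan is to recognize both partitions as instances of the same $Beta(2,1)$-type stick-breaking Markov chain, using Lemma \ref{lem:key} on one side and a recursive self-similarity of $\Delta$ on the other, and then to couple the two pictures via Lemma \ref{lem:connections}.

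First, I would apply Lemma \ref{lem:key} directly to describe the rescaled $y$-projection of $\mathcal{C}^+(\Phi_n)$ onto $[0,1]$. The breakpoints are $\prod_{j=1}^i B_j$ for $i \geq 0$, with i.i.d.\ $B_j \sim Beta(1,2)$, truncated by the termination rule in the lemma. Read from top to bottom, the interval lengths are $1-B_1,\, B_1(1-B_2),\, B_1 B_2 (1-B_3), \ldots$, which is exactly the continuous $Beta(2,1)$ stick-breaking sequence of (\ref{eqn:pi}).

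Second, I would establish the analogous Markov chain for the $x$-projection of $\mathcal{C}(\Delta)$ via the self-similarity of $\Delta$. Given the first vertex $V_1 = (x_1, y_1)$ after $(0,1)$, the remaining hull from $V_1$ to $(1,0)$ is determined by those points of the configuration lying in the sub-triangle $\Delta'$ with vertices $V_1$, $(x_1, 0)$, $(1, 0)$. The affine map $(x,y) \mapsto ((x-x_1)/(1-x_1), y/y_1)$ sends $\Delta'$ onto $\Delta$, with $V_1 \mapsto (0,1)$ and $(1,0) \mapsto (1,0)$, and sends uniform points in $\Delta'$ to uniform points in $\Delta$. Iterating this self-similarity gives a recursion for the $x$-projection in the spirit of Proposition \ref{prop:key}, so that the multiplicative decrement $(1 - x_{i+1})/(1 - x_i)$ is $Beta(1,2)$-distributed.

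Third, I would match the two Markov chains via Lemma \ref{lem:connections} (and in particular (\ref{eqn:uiphi})): conditional on $\Phi_n$ having a given number of points in $(0,1)^2$, its partition distribution agrees with that of an i.i.d.\ uniform configuration, and restricting to the triangle $\Delta$ (which is all that matters for the hull from $(0,1)$ to $(1,0)$, since points above $y=1-x$ are irrelevant) identifies $\mathcal{C}(\Delta)$ with the relevant portion of $\mathcal{C}^+(\Phi_n)$. That the number of pieces matches is then Buchta's recurrence, identified with $p_k^n$ in (\ref{eqn:pkn}).

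The main obstacle will be carrying out step two carefully: one has to check that the triangular sweep yields precisely the $Beta(1,2)$ jump law of Lemma \ref{lem:key} (where the geometry was rectangular), and to reconcile the Binomial number of points in each successively rescaled sub-triangle with the Poisson termination rule in Lemma \ref{lem:key}, so that the resulting random partitions agree exactly in distribution rather than only asymptotically.
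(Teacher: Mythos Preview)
Your proposal is correct and follows the same approach as the paper, which does not give a separate proof but simply states that the corollary is clear ``in the light of Lemmas \ref{lem:key} and \ref{lem:connections}.'' Your three steps are precisely an unpacking of that remark: Lemma \ref{lem:key} supplies the $Beta(1,2)$ multiplicative Markov chain for the $y$-projection of $\mathcal{C}^+(\Phi_n)$, the triangular self-similarity you describe is the same geometric argument applied to Buchta's configuration with the roles of $x$ and $y$ swapped, and Lemma \ref{lem:connections} identifies the two point configurations after conditioning on the point count. The obstacle you flag---matching the Binomial thinning in successive sub-triangles of $\Delta$ against the Poisson termination rule of Lemma \ref{lem:key}---is exactly the content of the conditioning in (\ref{eqn:uiphi}), so it is already handled and does not require new work.
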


 Lemma \ref{lem:connections} shows that Groeneboom's result, stated in the form of (\ref{eqn:general.r}), follows directly from the asymptotics of the $Beta(2,1)$ stick-breaking in Section \ref{sec:stickbreak}. This was the spirit of Buchta's approach \cite{buchta, buchta2}. In particular, he derived $\E(|\mathcal{C}(\Delta)|)$ and $\Var(|\mathcal{C}(\Delta)|)$ exactly for finite $n$ using (\ref{eqn:pkn}). He then generalized this approach to derive the exact analogues for results in Groeneboom \cite{groenboom88}, including distribution of the number of vertices and the area outside the convex hull of a uniform sample from the interior of a convex polygon with $r$ vertices. 

\section{Lower Convex Hull of an Inhomogeneous Poisson Point Process}
\label{sec:main}
We now prove Proposition \ref{thm:ppp} in the general case. Our strategy is to generalize the proof in the previous section. Consider the vertices $(V_i^\downarrow, i \geq 0)$ of $\mathcal{C}^+(\Phi_n)$ ordered in decreasing $y$-coordinate. The analogue of Lemma \ref{lem:key} is the following.

\begin{lem}\label{lem:key.general}
For each $i \geq 1$, let $B_i = \frac{y(V^\downarrow_i)}{y(V^\downarrow_{i-1})}$.
For $s > 0$, define the distribution $I_s$ on $[0,1]$ via its cdf (also denoted $I_s$):
$$ I_s(b) := \P(B_i \leq b \mid y(V^\downarrow_{i-1}= s). $$
Then
\begin{equation}\label{eqn:is}
I_s(b) = \frac{(1-b)G([0,sb]) + \int_0^bG([0,st])dt}{\int_0^1G([0,st]) dt}, \quad  0\le b\le 1.
\end{equation}
In particular, as $s \to 0$, $I_s \to I_0 := Beta(a,2)$.
\end{lem}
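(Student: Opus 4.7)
The plan is to mimic the infinitesimal-wedge argument of Lemma~\ref{lem:key}, adapted to the non-homogeneous intensity $n\lambda\times G$. Condition on $V^\downarrow_{i-1}=(x^*,s)$ and let $\alpha$ increase past the current supporting slope. The line of slope $-1/\alpha$ through $(x^*,s)$ meets height $y\in(0,s)$ at the $x$-coordinate $x^*+\alpha(s-y)$, so as $\alpha$ advances to $\alpha+d\alpha$ the newly swept wedge has horizontal width $(s-y)\,d\alpha$ at height $y$. The intensity of $\Phi_n$ on the slice of this wedge at height $y\in dy$ is therefore $n(s-y)\,dG(y)\,d\alpha$. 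Exactly as in Lemma~\ref{lem:key}, conditional on the first jump falling in this infinitesimal wedge, the $y$-coordinate of $V^\downarrow_i$ has density on $(0,s)$ proportional to $(s-y)\,dG(y)$. Since this shape depends on neither $\alpha$ nor $x^*$, marginalizing over $\alpha$ and invoking the $x$-translation invariance of $n\lambda\times G$ show that the conditional density of $y(V^\downarrow_i)$ given $y(V^\downarrow_{i-1})=s$ is proportional to $(s-y)\,dG(y)$.

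Consequently
\begin{equation*}
I_s(b) \;=\; \frac{\int_0^{sb}(s-y)\,dG(y)}{\int_0^s(s-y)\,dG(y)}.
\end{equation*}
Integration by parts with $u=s-y$, $dv=dG(y)$ (using $G([0,0])=0$ since $F$ is supported on $(0,\infty)$) gives
\begin{equation*}
\int_0^{sb}(s-y)\,dG(y) \;=\; s(1-b)\,G([0,sb]) + \int_0^{sb} G([0,y])\,dy,
\end{equation*}
and the substitution $y=st$ converts the remaining integral into $s\int_0^b G([0,st])\,dt$. Performing the same manipulation on the denominator and cancelling the common factor of $s$ delivers the stated formula~(\ref{eqn:is}).

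For the limit as $s\to0$, the hypothesis $F(y)\sim Cy^a$ together with $G([0,y])=-\log(1-F(y))$ gives $G([0,y])\sim Cy^a$ as $y\to 0$. Hence $G([0,sb])/s^a\to Cb^a$ and $G([0,st])/s^a\to Ct^a$ pointwise, with the uniform bound $G([0,st])/s^a \le (C+\varepsilon)t^a$ valid for all sufficiently small $s$. Since $t\mapsto t^a$ is integrable on $(0,1)$, dominated convergence lets me pass the limit through the integrals, yielding
\begin{equation*}
I_s(b) \;\longrightarrow\; \frac{(1-b)b^a+\int_0^b t^a\,dt}{\int_0^1 t^a\,dt} \;=\; (a+1)b^a - ab^{a+1},
\end{equation*}
which a direct computation identifies as the CDF of $Beta(a,2)$ (density $a(a+1)b^{a-1}(1-b)$).

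The main obstacle is the first paragraph: justifying that the conditional law of $y(V^\downarrow_i)$ given only $y(V^\downarrow_{i-1})=s$ is really captured by the infinitesimal-wedge density, uncontaminated by $x^*$ or by the preceding supporting slope. This is handled by the strong Markov property of $\Phi_n$ along the rotating support line together with the $x$-translation invariance of the intensity $n\lambda\times G$, exactly as in the exponential case; after that, everything reduces to the integration by parts and the dominated-convergence asymptotics sketched above.
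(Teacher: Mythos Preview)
Your proof is correct and follows essentially the same approach as the paper: identify the conditional density of $y(V^\downarrow_i)$ as proportional to $(s-y)\,dG(y)$ via the infinitesimal-wedge argument from Lemma~\ref{lem:key}, then deduce the CDF and the $Beta(a,2)$ limit. You are in fact more explicit than the paper, which simply asserts that ``this in turn implies the above formula for $I_s$'' without writing out the integration by parts, and which passes to the limit at the level of densities rather than CDFs; your dominated-convergence argument for the CDF is a perfectly acceptable variant.
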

\begin{proof}
Consider Figure \ref{fig:small.triangle} in Lemma \ref{lem:key}. Conditioned on $y(V^\downarrow_{i-1})=s$ and on the fact that the shaded triangle contains a point, the probability that $y(V^\downarrow_i)$ belongs to $dy$ at $y=sb$ is proportional to the area of the thin boldface rectangle of the figure w.r.t. the $\lambda\times G$ measure, that is proportional to $(1-b)d G([0,sb])$. This in turm implies the above formula for $I_s$, the cumulative distribution function of $B_i$ given $y(V_{i-1} = s)$. 
Now, as $s \to 0$, $G([0,sb]) \to C(sb)^a$ uniformly over all $b \in [0,1]$. Thus $\frac d {db}G([0,sb]) \to Cas^{a}b^{a-1}$. The term $s^{a}$ cancels, leave $I_s(db) \to I_0(db)$ at all points $b \in [0,1]$, where $I_0(db) \propto b^{a-1}(1-b)$. Thus, $I_0 = Beta(a,2)$. Since $I_0$ is continuous everywhere, $I_s \to I_0$ as distributions. (See, for example, \cite[\S 2]{durrett}).
\end{proof}

Define $S_0 = -\log(y(V^\downarrow_0))$, $S_i := -\log(y(V^\downarrow_i) = -\log(y(V^\downarrow_0)) + \sum_{j=1}^i-\log(B_i)$ for $i = 1, 2, \ldots$. Then $(S_i, i = 0, 1, \ldots)$ is a state-dependent random walk. At the $i$-th step, conditioned on $S_i = t$, $S_{i+1} - S_i$ is an independent random variable with distribution $-\log(I_{\exp(-t)})$ for the family of distributions $I_{(\cdot)}$ in Lemma \ref{lem:key.general}.
Alternatively, $(S_i)$ can be viewed as a Markov renewal process, or a Markov modulated random walk. In these settings, the inter-arrival times (or the jumps of the walk) are driven by a Markov process, in this case, $(S_i)$ itself. Since the jumps are a.s. positive, the Markov chain is transient. To the best of our knowledge, Korshunov \cite{korshunov.key, korshunov.clt} is one of the only authors who considered transient Markov renewal processes. We restate his relevant results \cite[Theorem 5]{korshunov.clt} for our case, omitting conditions which are automatically satisfied.

\begin{thm}[Korshunov's Central Limit Theorem for $S_i$ \cite{korshunov.clt}]
For $s \geq 0$, let $\xi_s$ be a random variable with distribution $I_s$. Suppose for some $\delta \in (0,1)$, $\{(\log(\xi_s))^2, 0 < s < \delta\}$ is uniformly integrable. Let $\mu = \E(-\log(\xi_0))$, $\sigma^2 = \Var(-\log(\xi_0))$. Suppose
$$ \E(-\log(\xi_s)) = \mu + o\left(\frac{1}{\sqrt{-\log(s)}}\right), $$
and
$$ \Var(-\log(\xi_s)) \to \sigma^2 $$
as $s \to 0$. Then as $s \to 0$, $t = -\log(s) \to \infty$, and
\begin{equation}\label{eqn:conclusion}
S_t/t \stackrel{a.s.}{\to} \mu, \hspace{0.5em} \mbox{ and } \hspace{0.5em} \frac{S_t - t\mu}{\sqrt{t\sigma^2}} \stackrel{d}{\to} \N(0,1).
\end{equation}
\end{thm}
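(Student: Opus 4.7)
The plan is to treat $(S_n)$ as a perturbation of the classical i.i.d.\ random walk with step distribution $-\log \xi_0$, using a martingale decomposition that isolates the state-dependent drift and then invoking standard martingale limit theorems.

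Write $\eta_j := S_j - S_{j-1}$ and define $m(s) := \E(-\log \xi_s)$, $v(s) := \Var(-\log \xi_s)$. I decompose $S_n = A_n + M_n$, where $A_n := \sum_{j=1}^n m(e^{-S_{j-1}})$ and $M_n := \sum_{j=1}^n \bigl(\eta_j - m(e^{-S_{j-1}})\bigr)$ is a mean-zero martingale with respect to the natural filtration $(\mathcal F_j)$ of $(S_j)$. Because $\xi_s \in [0,1]$, the jumps $\eta_j$ are a.s.\ non-negative, so $(S_n)$ is non-decreasing with a.s.\ infinite limit (since $m(s) \to \mu > 0$), and in particular $e^{-S_{j-1}} \to 0$ a.s.

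For the SLLN, $m(e^{-S_{j-1}}) \to \mu$ a.s., hence $A_n/n \to \mu$ by Ces\`aro averaging; the uniform integrability of $\{(\log \xi_s)^2 : 0<s<\delta\}$ bounds $v(e^{-S_{j-1}})$ uniformly for large $j$, so Chow's martingale SLLN gives $M_n/n \to 0$, whence $S_n/n \to \mu$. For the CLT, I apply a martingale CLT (e.g.\ Hall--Heyde, Corollary~3.1) to $M_n/\sqrt{n\sigma^2}$: the predictable quadratic variation $n^{-1}\sum_{j=1}^n v(e^{-S_{j-1}})$ converges in probability to $\sigma^2$ by combining the SLLN with $v(s) \to \sigma^2$, and the Lindeberg condition follows from the uniform integrability hypothesis. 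It then suffices to show $(A_n - n\mu)/\sqrt n \to 0$ in probability. Using $m(e^{-S_{j-1}}) - \mu = o\bigl(1/\sqrt{S_{j-1}}\bigr)$ from the hypothesis and $S_{j-1} \sim j\mu$ from the SLLN, the drift remainder is
$$
A_n - n\mu \;=\; \sum_{j=1}^n \bigl(m(e^{-S_{j-1}}) - \mu\bigr) \;=\; \sum_{j=1}^n o\!\left(\tfrac{1}{\sqrt{j\mu}}\right) \;=\; o(\sqrt n),
$$
and Slutsky's theorem combines the pieces into the claimed CLT.

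The main obstacle is making the last display rigorous: the hypothesis $m(s)-\mu = o(1/\sqrt{-\log s})$ is calibrated precisely so that, after substituting $S_{j-1} \sim j\mu$, the cumulative drift error stays $o(\sqrt n)$; but turning the pointwise ``$o$'' into a usable bound under the random sum requires an almost-sure lower bound $S_{j-1} \ge (1-\varepsilon)j\mu$ for all large $j$ (from the SLLN), so that a single deterministic rate function majorizes the summands uniformly on a high-probability event. This delicate interaction between the Markov dependence and the CLT scaling is what distinguishes the state-dependent random walk from the i.i.d.\ case, and is the step where Korshunov's argument invests the bulk of its technical effort.
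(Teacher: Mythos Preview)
The paper does not supply its own proof of this statement: it is quoted as a specialization of Theorem~5 in Korshunov~\cite{korshunov.clt} (``We restate his relevant results \ldots\ for our case, omitting conditions which are automatically satisfied''), and the paper's contribution is only the subsequent lemma verifying that Korshunov's hypotheses hold for the family $I_s$ of~(\ref{eqn:is}). So there is no in-paper argument to compare your proposal against.

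That said, your martingale-decomposition approach is a sound and standard route to limit theorems for state-dependent random walks. One small wrinkle: the claim ``$(S_n)$ is non-decreasing with a.s.\ infinite limit (since $m(s)\to\mu>0$)'' and the Ces\`aro step $A_n/n\to\mu$ implicitly require $S_n\to\infty$ \emph{before} you have the SLLN in hand; a direct argument (e.g.\ conditional Borel--Cantelli, or noting that for the explicit $I_s$ in~(\ref{eqn:is}) the function $m(\cdot)$ is continuous, strictly positive, and hence bounded below on $(0,e^{-S_0}]$) breaks the circularity. Beyond that, the predictable-quadratic-variation and Lindeberg verifications are routine under the stated uniform-integrability hypothesis, and the drift-remainder step you single out as the crux is handled exactly as you describe: combine the deterministic rate $|m(s)-\mu|=o(1/\sqrt{-\log s})$ with the a.s.\ eventual lower bound $S_{j-1}\ge(1-\varepsilon)j\mu$ from the SLLN to get $|A_n-n\mu|\le C\varepsilon\sqrt{n}$ eventually for every $\varepsilon>0$, hence $(A_n-n\mu)/\sqrt{n}\to 0$ a.s.
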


\begin{lem} The assumptions of Korshunov's Central Limit Theorem are satisfied. In particular, suppose $(t_n)$ is an increasing sequence of indices, such that $t_n \to \infty$ as $n \to \infty$. Define $s_n = e^{-t_n}$. Then as $n \to \infty$, $s_n \to 0$, and (\ref{eqn:conclusion}) holds for the sequence $(t_n)$.
\end{lem}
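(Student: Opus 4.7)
The plan is to verify the three hypotheses of Korshunov's theorem for the family $\{I_s\}_{s \ge 0}$: uniform integrability of $\{(\log \xi_s)^2 : 0 < s < \delta_0\}$, the rate $\E(-\log \xi_s) = \mu + o(1/\sqrt{-\log s})$, and $\Var(-\log \xi_s) \to \sigma^2$. The common input is the explicit formula~(\ref{eqn:is}) for $I_s$, together with the expansion
$$G([0,x]) = -\log(1-F(x)) = Cx^a(1 + \delta(x)), \qquad \delta(x) \to 0 \text{ as } x \to 0,$$
which follows from $F(y)\sim Cy^a$ and $-\log(1-u) = u + O(u^2)$ near $u = 0$. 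Substituting this into~(\ref{eqn:is}) and cancelling the common factor $Cs^a$ from numerator and denominator, $I_s$ admits a density
$$f_s(b) = a(a+1)\, b^{a-1}(1-b)\bigl(1 + r_s(b)\bigr),$$
where $r_s(b) \to 0$ uniformly in $b \in [0,1]$ at the rate $\eta(s) := \sup_{0 < x \le s}|\delta(x)|$.

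For uniform integrability, since $\xi_s \in (0,1]$, only the left tail is a concern. The representation above yields $f_s(b) \le C' b^{a-1}$ uniformly for small $s$, hence
$$\sup_{0 < s < \delta_0}\E|\log \xi_s|^{2+\varepsilon} \;\le\; C'\int_0^1 |\log b|^{2+\varepsilon}\, b^{a-1}\,db \;<\; \infty$$
for any $\varepsilon > 0$, the integral converging because $a > 0$. This gives the required uniform integrability of $(\log \xi_s)^2$. Combined with the weak convergence $I_s \to I_0 = Beta(a,2)$ from Lemma~\ref{lem:key.general}, it immediately yields $\Var(-\log \xi_s) \to \sigma^2$.

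The main obstacle is establishing the sharp rate on the mean. I would bound
$$\bigl|\E(-\log \xi_s) - \mu\bigr| \;=\; \Bigl|\int_0^1 (-\log b)\bigl(f_s(b) - f_0(b)\bigr)\,db\Bigr| \;\le\; K\,\eta(s),$$
using the explicit form of $f_s - f_0$ extracted from the representation above and the integrability of $(-\log b)\,b^{a-1}$ on $(0,1)$. Since $\eta(s) \to 0$ as $s \to 0$, this delivers the required $o(1/\sqrt{-\log s})$ decay once one quantifies the speed at which $\delta(x) \to 0$; this quantification is the delicate step of the argument. With all three hypotheses in hand, Korshunov's theorem applies to $(S_i)$, and restricting to the sequence $s_n = e^{-t_n}$ yields both the LLN $S_{t_n}/t_n \to \mu$ and the CLT in~(\ref{eqn:conclusion}).
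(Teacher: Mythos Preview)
Your approach matches the paper's: both verify Korshunov's three hypotheses by controlling the distance between $I_s$ and $I_0 = Beta(a,2)$ and transferring this to moment bounds. You work with the density $f_s$, the paper works directly with the CDF $I_s$; this is cosmetic. Your treatment of uniform integrability and of the variance convergence is effectively the paper's.

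The gap is exactly where you flag it. Your bound $|\E(-\log\xi_s) - \mu| \le K\eta(s)$, with $\eta(s) = \sup_{x\le s}|\delta(x)|$, does not by itself give the rate $o(1/\sqrt{-\log s})$: the hypothesis only says $\eta(s)\to 0$, with no speed, and you stop there, calling the remaining quantification ``the delicate step.'' The paper's resolution---the step you do not carry out---is to phrase the bound in terms of the \emph{additive} error $\epsilon(s) := G([0,s]) - Cs^a$ rather than the relative error $\eta(s)$. Concretely, the paper asserts $|\epsilon(sb)| \le 2|\epsilon(s)|$ uniformly in $b\in[0,1]$ for small $s$, whence $|I_s(b)-I_0(b)| \le 8|\epsilon(s)|$ and then $|\E(-\log\xi_s)-\mu| \le 8|\epsilon(s)|$. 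By hypothesis $\epsilon(s) = o(s^a)$, and for every $a>0$ one has the elementary fact $s^a = o(1/\sqrt{-\log s})$ as $s\to 0$, since any positive power of $s$ decays faster than any inverse power of $\log(1/s)$. Hence $\epsilon(s) = o(1/\sqrt{-\log s})$ automatically; no further rate information on $\delta$ is required. That one-line comparison of polynomial versus logarithmic decay is the entire content of what you call the delicate step, and supplying it completes the argument.
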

\begin{proof} By the assumptions of Proposition \ref{thm:ppp}, as $s \to 0$,
$G([0,s]) = Cs^a + \epsilon(s)$, where $\epsilon(s) = o(s^a)$. Thus there exists some small $S$ such that for all $s < S$, for all $b \in [0,1]$, $|\epsilon(sb)| < 2|\epsilon(s)|$. Bound $I_s(b)$ in (\ref{eqn:is}) by
$$
\frac{1-2|\epsilon(s)|}{1+2|\epsilon(s)|} I_0(b) \leq I_s(b) \leq \frac{1+2|\epsilon(s)|}{1-2|\epsilon(s)|} I_0(b),
$$
where $I_0(b) = (a+1)b^a - ab^{a+1}.$ For some small $S'$, for all $s < S'$, $|\epsilon(s)|< 1/4$. So
\begin{equation}
\label{eqn:bound.fb}
|I_0(b) - I_s(b)| \leq 8|\epsilon(s)|,
\end{equation}
where this bound holds for all $s < \min(S, S')$. Note that $-\log(\xi_0)$ is a.s. positive and has finite third moments for all $a > 0$. For any $r > 0$, write
$$ \E((-\log(\xi_s))^r) = \int_0^\infty \P((-\log(\xi_s))^r > x)\, dx = \int_0^\infty\P(\xi_s > \exp(-x^{1/r}))\, dx.$$
Thus for all $s < \min(S, S')$, (\ref{eqn:bound.fb}) implies $|\E(-\log(\xi_s)^3) - \E(-\log(\xi_0)^3)| < (1+8\epsilon(s)),$ so the family of squared jumps $\{\log(\xi_s)^2, s < \min(S, S')\}$ is uniformly integrable. Similarly, we have convergence of the expectation and variance. The error in the expectation is bounded by
$$ |\E(-\log(\xi_s)) - \mu| < 8|\epsilon(s)|.$$
For any $a > 0$, $s^a = o\left(\frac{1}{\sqrt{-\log(s)}}\right)$. Since $\epsilon(s) = o(s^a)$ by assumption, $\epsilon(s) = o\left(\frac{1}{\sqrt{-\log(s)}}\right)$. 
\end{proof}

\begin{prop}\label{prop:nt}
Let $J(t)$ be the number of jumps in $[0,t]$ of the random walk $(S_i)$. Then as $t \to \infty$,
\begin{equation}\label{eqn:nt}
\frac{J(t)}{t} \stackrel{a.s.}{\to} \frac{1}{\mu}, \hspace{0.5em} \mbox{ and } \hspace{0.5em} \frac{J(t) - \mu^{-1}t}{\sqrt{\sigma^2\mu^{-3}t}} \stackrel{d}{\to} \N(0,1),
\end{equation}
where $\mu = \ds\frac{1}{a} + \frac{1}{a+1}$, $\sigma^2 = \ds\frac{1}{a^2} + \frac{1}{(a+1)^2}$.
\end{prop}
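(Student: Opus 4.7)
The plan is to derive both the law of large numbers and the central limit theorem for $J(t)$ by inverting the corresponding statements for $S_n$ supplied by Korshunov's theorem in the preceding lemma. Since the increments $S_{i+1}-S_i=-\log(B_{i+1})$ are almost surely positive, the counting process $J(t) = \max\{i : S_i \le t\}$ and the walk $(S_i)$ are related by the duality $\{J(t) \ge n\} = \{S_n \le t\}$, and positivity of the increments forces $J(t) \to \infty$ a.s. as $t \to \infty$. For the LLN, the sandwich $S_{J(t)} \le t < S_{J(t)+1}$ combined with $S_n/n \to \mu$ a.s. yields $t/J(t) \to \mu$ a.s., and hence $J(t)/t \to 1/\mu$.

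For the CLT I would use the Anscombe-style inversion. Fix $x \in \R$ and set the deterministic index
$$ n_t := \bigl\lfloor \mu^{-1}t + x\sqrt{\sigma^2\mu^{-3}t}\,\bigr\rfloor. $$
Then
$$ \P\bigl(J(t) < n_t\bigr) = \P\bigl(S_{n_t} > t\bigr) = \P\!\left(\frac{S_{n_t} - n_t\mu}{\sqrt{n_t\sigma^2}} > \frac{t - n_t\mu}{\sqrt{n_t\sigma^2}}\right). $$
As $t \to \infty$ one checks that $n_t \to \infty$ and the deterministic threshold on the right converges to $-x$, so Korshunov's CLT applied along the deterministic sequence $(n_t)$ (together with Slutsky) implies that this probability converges to $1-\Phi(-x) = \Phi(x)$. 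This gives the Gaussian limit stated in (\ref{eqn:nt}).

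It remains to identify the constants. By Lemma \ref{lem:key.general}, $I_0 = Beta(a,2)$ with density $a(a+1)b^{a-1}(1-b)$ on $[0,1]$. Writing $\psi$ and $\psi'$ for the digamma and trigamma functions and using the recursions $\psi(z+1)=\psi(z)+1/z$ and $\psi'(z+1)=\psi'(z)-1/z^2$, for $B \sim Beta(a,2)$ the standard formulas
\begin{align*}
\mu &= \E(-\log B) = \psi(a+2) - \psi(a) = \frac{1}{a} + \frac{1}{a+1},\\
\sigma^2 &= \Var(\log B) = \psi'(a) - \psi'(a+2) = \frac{1}{a^2} + \frac{1}{(a+1)^2}
\end{align*}
reproduce the constants in the statement.

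The main conceptual subtlety is that Korshunov's theorem provides a CLT along \emph{deterministic} indices, whereas $J(t)$ involves a random index; the Anscombe-style inversion through the duality $\{J(t) < n\} = \{S_n > t\}$ is precisely what allows us to bypass any separate random-index CLT. I do not anticipate additional difficulties, since the increments are positive (so $J(t)$ is well defined and finite), Korshunov's theorem has already been verified in the preceding lemma, and the digamma/trigamma computations are textbook.
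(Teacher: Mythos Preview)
Your proposal is correct. The law of large numbers part is essentially identical to the paper's argument: both use the sandwich $S_{J(t)}\le t<S_{J(t)+1}$ together with $S_n/n\to\mu$ a.s.\ from Korshunov's theorem.

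For the CLT you take a genuinely different, and arguably cleaner, route. The paper sandwiches $(t-J(t)\mu)/(\sigma\sqrt{t/\mu})$ between the analogous quantities with $S_{J(t)}$ and $S_{\nu(t)}$ and then invokes Anscombe's theorem to transfer the CLT from deterministic to the random indices $J(t)$ and $\nu(t)$. You instead exploit the duality $\{J(t)<n\}=\{S_n>t\}$ (valid because the increments are positive) and apply Korshunov's CLT only along the deterministic sequence $n_t=\lfloor \mu^{-1}t+x\sqrt{\sigma^2\mu^{-3}t}\rfloor$, checking that the centering $(t-n_t\mu)/\sqrt{n_t\sigma^2}\to -x$. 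This sidesteps the need to verify the Anscombe uniform-continuity-in-probability condition for the state-dependent walk, which the paper takes for granted. The trade-off is that the paper's sandwich makes the inversion more visibly two-sided, whereas your argument needs the (trivial) remark that $\P(J(t)=n_t)\to 0$ to pass from $\{J(t)<n_t\}$ to $\{J(t)\le n_t\}$; both are standard renewal-theory devices. Your identification of $\mu$ and $\sigma^2$ via the digamma and trigamma recursions is correct and matches the constants in the statement.
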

\begin{proof} This result follows from (\ref{eqn:conclusion}) by standard techniques of renewal theory. Our treatment follows that of Gut \cite[\S 2.5]{gut}. Let $\nu(t)$ be the first time in which $S_i > t$. Then $\nu(t) = J(t) + 1$ a.s. By definition,
$$ S_{J(t)} \leq t < S_{\nu(t)}.$$
Therefore,
$$ \frac{S_{J(t)}}{J(t)} \leq \frac{t}{J(t)} < \frac{S_{\nu(t)}}{\nu(t)}\cdot\frac{J(t)+1}{J(t)}. $$
Since the walk has a.s. positive increments, $J(t) \to \infty$ as $t \to \infty$ a.s. 
But $S_t/t \stackrel{a.s.}{\to} \mu$ by (\ref{eqn:conclusion}), therefore, by \cite[Theorem 2.1]{gut},
$$\frac{S_{J(t)}}{J(t)} \stackrel{a.s.}{\to} \mu, \hspace{0.5em} \mbox{ and } \hspace{0.5em} \frac{S_{\nu(t)}}{\nu(t)}\cdot\frac{J(t)+1}{J(t)} \stackrel{a.s.}{\to} \mu.$$ 
Thus $\frac{t}{J(t)} \stackrel{a.s.}{\to} \mu$, and $\frac{J(t)}{t} \stackrel{a.s.}{\to} \mu$ by the Continuous Mapping Theorem. 
Similarly,
$$ \frac{S_{J(t)} - J(t)\mu}{\sigma\sqrt{t/\mu}} \leq \frac{t - J(t)\mu}{\sigma\sqrt{t/\mu}} \leq \frac{S_{\nu(t)} - \nu(t)\mu}{\sigma\sqrt{t/\mu}} + \frac{\mu}{\sigma}\sqrt{\frac{\mu}{t}}.$$
As argued above, $J(t) \to \frac{t}{\mu}$ a.s., and therefore so does $\nu(t)$. By Anscombe's theorem \cite[\S 1.3]{gut}, \cite{anscombe}, the sequences $(S_{J(t)})$ and $(S_{\nu(t)})$ satisfy the same central limit theorem as that of $S_t$. 
Therefore, as $t \to \infty$,
$$\frac{t - J(t)\mu}{\sigma\sqrt{t/\mu}} = -\frac{J(t) - \mu^{-1}t}{\sqrt{\sigma^2\mu^{-3}t}} \stackrel{d}{\to} N(0,1). $$
\end{proof}

Recall that $|\mathcal{C}^+(\Phi_n)| = J[-\log(y(V^\uparrow_0)) + \log(y(V^\downarrow_0))]$. Since $F(x) \sim Cx^a$ for $x$ near $0$, $y(V^\uparrow_0) \sim F^{-1}(\frac{1}{n}) \sim \frac{1}{C}n^{-1/a}$, so we expect $-\log(y(V^\uparrow_0)) \sim \frac{1}{a}\log(n)$ and $|\mathcal{C}^+(\Phi_n)| \approx J(\frac{1}{a}\log(n))$. Formally, we have the following analogue of Lemma \ref{lem:approx}. 

\begin{lem}\label{lem:approx.general}
Let $\delta(n) = Z_n^+ - J(\frac{1}{a}\log(n))$. Then $\delta(n) = \mathcal{O}_P(1)$.
\end{lem}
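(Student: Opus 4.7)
The plan follows the strategy of Lemma~\ref{lem:approx}, adapting two ingredients to the general atom distribution: the extreme value behavior of $y(V_0^\uparrow)$, and the renewal fluctuation of $J$. First I would establish the key identity
$$Z_n^+ = J\bigl(-\log(y(V_0^\uparrow))\bigr),$$
which follows because the chain $(S_i)$ coincides with $-\log(y(V_i^\downarrow))$ for $0 \le i \le Z_n^+$, the last vertex $V_{Z_n^+}^\downarrow$ equals $V_0^\uparrow$, and the chain has a.s.\ strictly positive jumps (as $B_i \in (0,1)$). Writing $R_n := -\log(y(V_0^\uparrow)) - \tfrac{1}{a}\log(n)$, this yields $\delta(n) = J(\tfrac{1}{a}\log(n) + R_n) - J(\tfrac{1}{a}\log(n))$, reducing the proof to controlling $R_n$ and small-window fluctuations of $J$.

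Next I would show $R_n = \mathcal{O}_P(1)$ via extreme value analysis. The vertex $V_0^\uparrow$ is the point of $\Phi_n$ in the rectangle with smallest $y$-coordinate, so
$$\P\bigl(y(V_0^\uparrow) > y\bigr) = e^{-nG([0,y])} = (1-F(y))^n.$$
Using $F(y) \sim Cy^a$ near $0$, for each fixed $z > 0$ one has $(1-F(zn^{-1/a}))^n \to e^{-Cz^a}$, so $n^{1/a}\,y(V_0^\uparrow)$ converges in distribution to a Weibull law. Equivalently, $R_n$ converges in distribution to a finite random variable, so $R_n = \mathcal{O}_P(1)$. Note this is a sharper statement than the analogous step in the exponential case, where only $R_n = \mathcal{O}_P(\ell(n))$ was derived for slowly growing $\ell$.

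Finally, I would fix $c_n \to \infty$ and $\varepsilon > 0$, and choose $M$ so that $\limsup_n \P(|R_n| > M) < \varepsilon$. On $\{|R_n| \le M\}$, monotonicity of $J$ gives
$$|\delta(n)| \le J\bigl(\tfrac{1}{a}\log(n) + M\bigr) - J\bigl(\tfrac{1}{a}\log(n) - M\bigr).$$
Because $I_s \stackrel{d}{\to} I_0 = \mathrm{Beta}(a,2)$ as $s \to 0$ (Lemma~\ref{lem:key.general}), the chain $(S_i)$ is asymptotically a pure renewal process with inter-arrival law $-\log(\mathrm{Beta}(a,2))$. A Blackwell-type argument, using the quantitative bound $|I_s(b) - I_0(b)| \le 8|\epsilon(s)|$ from the proof of Proposition~\ref{prop:nt} to couple $(S_i)$ with an i.i.d.\ renewal chain, shows that the number of visits of $(S_i)$ to the window $[\tfrac{1}{a}\log(n) - M, \tfrac{1}{a}\log(n) + M]$ converges in distribution, as $n \to \infty$, to the renewal count in an interval of length $2M$ of the limiting process -- an a.s.\ finite random variable. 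Hence the right-hand side is $\mathcal{O}_P(1)$, so $\limsup_n \P(|\delta(n)| \ge c_n) \le \varepsilon$; sending $\varepsilon \to 0$ concludes. The hard part is this last step: the Markov-modulated nature of $(S_i)$ means Blackwell's key renewal theorem does not apply directly, and tightness of the visit count to a fixed-length window at time $\tfrac{1}{a}\log(n)$ requires the quantitative rate $\epsilon(s) = o(s^a)$ to transfer the classical renewal estimate to $(S_i)$ via coupling with the limiting i.i.d.\ chain.
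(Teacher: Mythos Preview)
Your approach is correct but more laborious than the paper's. Both proofs reduce to controlling $-\log(y(V_0^\uparrow))$ around $\tfrac{1}{a}\log(n)$ and then transferring this to $J$. Where they differ is in the trade-off between the two steps. You prove the sharp statement that $R_n = -\log(y(V_0^\uparrow)) - \tfrac{1}{a}\log(n)$ converges in distribution (Weibull limit), giving a \emph{bounded} window $[-M,M]$; this forces you into the ``hard part'' you flag, namely a Blackwell-type statement that $J(\tfrac{1}{a}\log(n)+M) - J(\tfrac{1}{a}\log(n)-M)$ is tight, which for the Markov-modulated chain $(S_i)$ is not available off the shelf and would need the coupling you sketch.

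The paper instead relaxes the first step: it only shows that for any $\ell(n)\to\infty$ arbitrarily slowly, $-\log(y(V_0^\uparrow)) \in [\tfrac{1}{a}\log(n)-\ell(n),\,\tfrac{1}{a}\log(n)+\ell(n)]$ with high probability (via the same strip decomposition, writing $Y'=\min_i \epsilon_i = F^{-1}(B)$ with $B\sim \mathrm{Beta}(1,n)$). This weaker localization is exactly enough: given $c_n\to\infty$, pick $\ell(n)\to\infty$ with $\ell(n)=o(c_n)$; on the good event, $|\delta(n)| \le J(\tfrac{1}{a}\log(n)+\ell(n)) - J(\tfrac{1}{a}\log(n)-\ell(n))$, and the almost-sure convergence $J(t)/t \to 1/\mu$ from Proposition~\ref{prop:nt} alone forces this increment to be $O(\ell(n)) = o(c_n)$. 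No Blackwell theorem, no coupling to the limiting i.i.d.\ chain. So your sharper extreme-value step buys nothing here and costs you the delicate renewal argument; the paper's coarser window lets the already-proved strong law do all the work.
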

\begin{proof}
The proof is along the same lines as Lemma \ref{lem:approx}. By (\ref{eqn:nt}), it is sufficient to show that for a function $\ell$ tending to infinity arbitrarily slowly, with high probability,
\begin{equation}\label{eqn:bound.ymn.general}
\frac{1}{a}\log(n)-\ell(n) \leq -\log(y(V^\uparrow_0)) \leq \frac{1}{a}\log(n)+\ell(n).
\end{equation}
Again, we divide the region $(0,1) \times (0,\infty)$ into $n$ vertical strips $(i/n,(i+1)/n) \times (0,\infty)$ for $i = 0, \ldots, n-1$. In each strip, the $y$-coordinates of the points of $\Phi_n$ form an inhomogeneous Poisson point process on $(0,\infty)$ with intensity measure $G$. In particular, for the $i$-th strip, the first $y$ jump $\epsilon_i$ has distribution $F$. Define $Y' := \min(\epsilon_1, \ldots, \epsilon_n)$. Then $Y' \stackrel{d}{=} F^{-1}(B)$ where $B \stackrel{d}{=} Beta(1,n)$. Now, 
$$ \P(B \leq \frac{\ell(n)}{n}) = 1 - \left(1-\frac{\ell(n)}{n}\right)^n \to 1 $$
as $n \to \infty$. Conditioned on the event $B \leq \frac{\ell(n)}{n}$, for large $n$,
$$ F^{-1}(B) \leq \frac{2}{C}B^{1/a} \leq \frac{2}{C}\frac{\ell(n)^{1/a}}{n^{1/a}}.$$
Therefore, for large $n$,
$$ \P\{-\log(Y') \geq \frac{1}{a}\log(n) - \frac{1}{a}\log(\ell(n)) - \log\left(\frac 2C\right)\} = 1 - \left(1-\frac{\ell(n)}{n}\right)^n. $$
Similarly, 
$$ \P\left(B \leq \frac{1}{n\ell(n)}\right) = 1 - \left(1-\frac{1}{n\ell(n)}\right)^n \to 0 $$
as $n \to \infty$. By the same argument, for large $n$,
$$ \P\{-\log(Y') \leq \frac{1}{a}\log(n) + \frac{1}{a}\log(\ell(n)) + \log\left(\frac 2 C\right) \} = 1 - \left(1-\frac{1}{\ell(n)n}\right)^n. $$
Therefore, (\ref{eqn:bound.ymn.general}) holds with high probability for $Y'$ in place of $y(V^\uparrow_0)$. But $Y' = y(V^\uparrow_0)$ conditioned on the event $Y' < G^{-1}(1)$, with $G^{-1}(1)$ the real number such that $G([0,G^{-1}(1)]=1$, namely $G^{-1}(1)= F^{-1} (1-e^{-1})$. This is precisely the event that there is at least one point of $\Phi_n$ in the square $(0,1) \times (0,G^{-1}(1))$, so this event happens with probability $1 - e^{-n}$. Thus, (\ref{eqn:bound.ymn.general}) happens with probability at least 
$$ (1 - e^{-n})\left(1 - \left(1-\frac{1}{\ell(n)n}\right)^n\right) \to 1. $$

%
\end{proof}
\begin{proof}[Proof of Proposition \ref{thm:ppp}]
The argument is exactly the same as in the case where $F$ is exponential given in Section \ref{sec:simple.poisson}. In summary, Proposition \ref{prop:nt} establishes the central limit theorem for $J(\frac{1}{a}\log(n))$. The number of faces to the left and right of $V_0^\uparrow$, $|\mathcal{C}^+(\Phi_n)|$ and $|\mathcal{C}^-(\Phi_n)|$, are independent conditioned on $y(V_0^\uparrow)$. Lemma \ref{lem:approx.general} states $|\mathcal{C}^+(\Phi_n)|$ and $|\mathcal{C}^-(\Phi_n)|$ are simultaneously well-approximated by two independent copies of $J(\frac{1}{a}\log(n))$. Thus $N(\Phi_n)$ is distributed like their sum, and this concludes the proof. 
\end{proof}

For $\alpha \in (0,\infty)$, recall that $L(\alpha)$ is the line orthogonal to $(1,\alpha)$ which supports $\mathcal{C}^+(\Pi_n)$, and that $L_x(\alpha)$ is its $x$-intercept. Note that $L_x(\cdot)$ is a pure jump process indexed by $\alpha$. Let $L_x^i, i = 1, \ldots, |\mathcal{C}^+(\Phi)|$ be the sequence of values of $L_x(\cdot)$ ordered in increasing value. Define $L_x^0 = x(V^\downarrow_0)$. Consider the triangles by the consecutive lines and the $x$-axis as in Figure \ref{fig:triangle} below. For $i = 1, 2, \ldots, |\mathcal{C}^+(\Phi)|$, let $T_i$ be the $i$-th triangle, $area(T_i)$ denote its area with respect to the measure $\lambda \times G$. 

\begin{center}
\begin{figure}[h]
\includegraphics[width=0.8\textwidth]{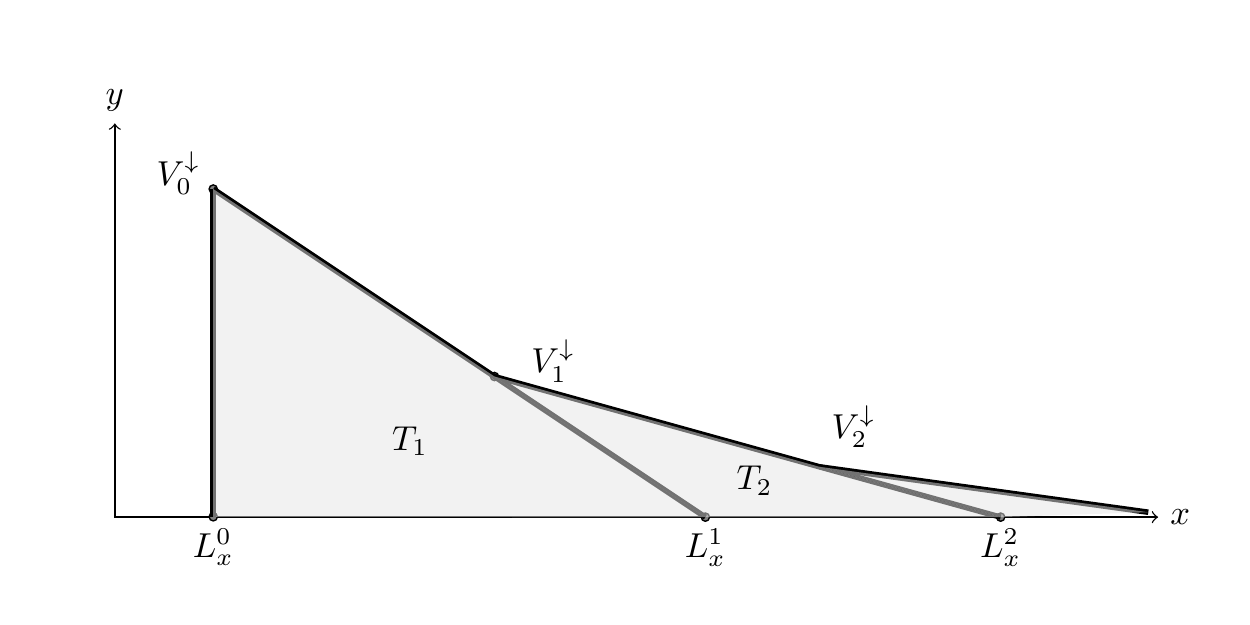}
\caption{Divide up the region between $\mathcal{C}^+(\Phi_n)$ and the $x$-axis into triangles based on the jumps of $L_x(\cdot)$. \protect \label{fig:triangle}}
\end{figure}
\end{center}

\begin{cor}
For $i = 1, 2, \ldots, R^+_n$, $area(T_i)$ are i.i.d. $exponential(n)$, independent of the vertices $V^\downarrow_i$ of $\mathcal{C}^+(\Phi_n)$.
\end{cor}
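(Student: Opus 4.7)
The strategy is to recognize each triangle $T_i$ as the void region swept out by the rotating support line of the underlying Poisson process, and then invoke the first-arrival characterization of the PPP. Note first that the open interior of $T_i$ contains no point of $\Phi_n$: since $T_i$ lies strictly below a face of $\mathcal{C}^+(\Phi_n)$, any interior point of $T_i$ would sit strictly below the lower convex hull, contradicting its definition.

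The core step reuses the sweeping parametrization that drives the proofs of Lemma \ref{lem:key} and Lemma \ref{lem:key.general}. As $\alpha$ varies over the interval on which $L(\alpha)$ supports the vertex $V_{i-1}^\downarrow$ (the apex of $T_i$), the support line pivots about $V_{i-1}^\downarrow$ and sweeps out a sub-region of $T_i$ whose $\lambda\times G$-measure grows continuously from $0$ to $area(T_i)$. Conditional on the history $\mathcal{F}_{i-1}$ up to the discovery of $V_{i-1}^\downarrow$, the strong Markov property of $\Phi_n$ says that the points of $\Phi_n$ in the still-unswept region form an independent PPP of intensity $n\lambda\times G$. The next vertex $V_i^\downarrow$ is revealed at the $\alpha$ at which the rotating line first touches a point of this independent PPP, and the $\lambda\times G$-mass accumulated up to that instant is, by the first-arrival characterization of a Poisson process, distributed as $exponential(n)$. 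Hence $area(T_i)\mid\mathcal{F}_{i-1}\sim exponential(n)$, and iterating in $i$ gives the i.i.d.\ claim.

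The independence from the vertex chain is understood in the generative sense used in Lemma \ref{lem:key.general}: the transition $V_{i-1}^\downarrow\mapsto V_i^\downarrow$ decomposes into (a) the $\lambda\times G$-waiting mass $area(T_i)$ consumed before the rotating support line hits its next PPP arrival, and (b) the angular/directional coordinate of that arrival, which determines the $Beta$-type ratio $B_i=y(V_i^\downarrow)/y(V_{i-1}^\downarrow)$ of Lemma \ref{lem:key.general}. For a PPP, the waiting time and the uniform-on-arrival location are conditionally independent given the past, so $(area(T_i))_{i\ge 1}$ is independent of $(B_i)_{i\ge 1}$; together with the Markov structure driving the vertex chain through the $B_i$'s, this gives the stated independence of $(area(T_i))$ from $(V_i^\downarrow)$.

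The main technical obstacle is the identification of the swept $\lambda\times G$-measure with the interarrival time of an effective rate-$n$ Poisson process. In the homogeneous case $G=\lambda$ this reduces to elementary planar geometry via a measure-preserving change of variable from Cartesian coordinates to support-line coordinates $(\alpha,\text{signed distance})$. For general $G$, one must check that the push-forward of $n\lambda\times G$ under this change of variable has constant effective rate $n$ per unit swept $\lambda\times G$-mass; this is precisely the computation that produced the termination probability $\exp(-n(1-x^\ast)y^\ast/2)$ in Lemma \ref{lem:key}, and its analogue in Lemma \ref{lem:key.general}. Once this change of variable is justified, both the i.i.d.\ exponential marginals and the independence from the vertex chain follow immediately.
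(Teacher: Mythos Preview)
Your approach is correct, and at heart it is the same argument as the paper's: $T_i$ is the region swept out by the support line as it pivots about $V^\downarrow_{i-1}$ until it meets the next point of $\Phi_n$, and the void-probability property of a PPP of intensity $n\,\lambda\times G$ makes the swept $\lambda\times G$-mass at first contact $exponential(n)$, independently of the past.

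The paper packages this more concretely. Rather than parametrizing by swept mass, it parametrizes by the base length $S = L_x^i - L_x^{i-1}$ of the triangle on the $x$-axis and observes that $area(T_s)$ is linear in $s$, with slope $f(y(V^\downarrow_{i-1}))=\int_0^{y(V^\downarrow_{i-1})}(1-y/y(V^\downarrow_{i-1}))\,dG([0,y])$ depending only on the apex height. Then $\P(S>s)=\exp(-n\,s\,f)$ gives $S\sim exponential(nf)$, whence $area(T_S)=S\cdot f\sim exponential(n)$ regardless of the apex. Your time-change version bypasses this explicit linearity computation, which is cleaner and more obviously insensitive to the form of $G$, at the cost of being less concrete. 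Conversely, your waiting-time/location decomposition for independence from the vertex chain is more explicit than the paper's, which simply asserts independence after showing the conditional law does not depend on $(V^\downarrow_{i-1},L_x^{i-1})$.

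Your final paragraph overcomplicates matters. That the swept mass at first arrival is $exponential(n)$ is not a technical obstacle requiring a measure-preserving change of variables; it is immediate from the definition of a PPP: for any increasing family of Borel sets $A_t$ with $(\lambda\times G)(A_t)=t$, the first $t$ at which $A_t$ contains a point of $\Phi_n$ is $exponential(n)$. The termination probability in Lemma~\ref{lem:key} that you invoke is an instance of this fact, not a prerequisite for it.
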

\begin{proof}
Let $S = L_x^i - L_x^{i-1}$ be the $i$-th increment of the process $L_x(\cdot)$. Conditioned on $V^\downarrow_{i-1}$ and $L_x^{i-1}$, let $T_S$ denote the triangle with vertices $V^\downarrow_{i-1}$, $(L_x^{i-1},0)$ and $(L_x^{i-1} + S,0)$. Now, 
$$area(T_S) = S\int_0^{y(V^\downarrow_0)}(1-y/y(V^\downarrow_0)) dG([0,y]) = S f(y(V^\downarrow_0)), $$
where $f(y(V^\downarrow_0))$ is the above integral, which is independent of $S$. We have
$$ \P(S > s) = \exp(-n\ (area(T_s))) = \exp(-nsf(y(V^\downarrow_0))). $$
This implies
$$ \P(area(T_S) > t) = \P\left(S > \frac{t}{f(y(V^\downarrow_0))}\right) =
\exp\left(-n\frac{t}{f(y(V^\downarrow_0))}f(y(V^\downarrow_0))\right) = \exp(-nt). $$
Therefore, $area(T_S)$, independent of $V^\downarrow_{i-1}$, $L_x^{i-1}$ and $S$, is distributed as an exponential random variable with rate $n$. 

\end{proof}
Groeneboom \cite{groenboom12} proved this when $G$ is the Lebesgue measure. He used it to derive the asymptotics for the sum $A_n = \sum_i area(T_i)$ jointly with $|\mathcal{C}^+(\Phi_n)|$. Since we approximate $|\mathcal{C}^+(\Phi_n)|$ by the number of renewals in a fixed interval (cf. Lemma \ref{lem:approx.general}), the asymptotic normality for $A_n$ easily follows. For general $G$, let us compute the expectation and variance of $A_n$ for large $n$. We have
\begin{align*}
\E(A_n) &= \E(\E(A_n|J(\log(n)/a))) = \E(J(\log(n)/a)) = \frac{a+1}{2a+1}\log(n), \\
\Var(A_n) &= \E(J(\log(n)/a)) + \Var(J(\log(n)/a)) = \frac{6 a^3+8 a^2+4 a+1}{(2 a+1)^3}\log(n).
\end{align*}
For $a = 1$, this reduces to $\E(A_n) = \ds\frac{2}{3}\log(n)$, $\Var(A_n) = \ds\frac{28}{27}\log(n)$, as showed in \cite{groenboom12, buchta2}. See \cite{schneider} for a historical review and summary of recent developments on asymptotics of $A_n$ in higher dimensions.

Finally, we note that Proposition \ref{thm:ppp} is stated with the Poisson point process being restricted to the rectangle $(0,1) \times (0,G^{-1}(1))$. If we widen this rectangle to $(0,1) \times (0,2G^{-1}(1))$, the probability of points in  $(0,1) \times (G^{-1}(1),2G^{-1}(1))$ being a vertex of the lower convex hull is clearly very small. Thus Proposition \ref{thm:ppp} also holds for the lower convex hull of points from the infinite strip $(0,1) \times (0,\infty)$. We chose to state it for the rectangle to make the role of $n$ clear: on the rectangle  $(0,1) \times (0,G^{-1}(1))$, we have $Poisson(n)$ points. This makes conditioning arguments such as that in Lemma \ref{lem:connections} a little more convenient.

\section{Proof of the main theorem}\label{sec:coupling}

The discrete case corresponds to a PPP  on $(0,1) \times (0,\infty)$ with intensity measure $\lambda_n \times G$, where $\lambda_n$ is a discrete measure on $\R$ which puts mass $1$ at every point $i/n$ for $i = 0, 1, \ldots$, and $0$ elsewhere. Clearly $\frac{1}{n}\lambda_n \to \lambda$ in the space of measures, thus we expect the discrete and continuous cases to have the same asymptotics. We make this rigorous through a direct coupling. 

Divide $(0,1) \times (0,\infty)$ into $n$ vertical strips $(i/n,(i+1)/n) \times (0,\infty)$, $i = 0, \ldots, n-1$. Form the new point process $\widetilde{\Phi}_n$ from $\Phi_n$ as follows: for each point $(X_i,Y_i)$ in the $i$-th strip in $\Phi_n$, place a point $(i/n,Y_i)$ in~$\widetilde{\Phi}_n$. This produces an a.s. bijection $\psi: \Phi_n \to \widetilde{\Phi}_n$, such that a point $(X_i,Y_i)$ of $\Phi_n$ and its image $\psi((X_i,Y_i))$ have equal $y$-coordinates, and differ by at most $\frac{1}{2n}$ in their $x$-coordinates. We use this coupling to show the following, which implies that Theorem~\ref{thm:main.rn} is equivalent to Proposition \ref{thm:ppp}. 

\begin{prop}\label{prop:coupling}
$$ |\mathcal{C}^+(\Phi_n)| - |\mathcal{C}^+(\widetilde{\Phi}_n)| = \mathcal{O}_P(1). $$
\end{prop}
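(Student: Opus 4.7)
The plan is to bound the number of ``lost'' and ``gained'' vertices under the coupling map $\psi:\Phi_n \to \widetilde{\Phi}_n$, showing that both counts are $\mathcal{O}_P(1)$.

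First I would exploit the following structural observation: within $\widetilde{\Phi}_n$, each vertical line $\{i/n\} \times (0,\infty)$ contributes at most one point to $\mathcal{C}^+(\widetilde{\Phi}_n)$, namely the strip minimum $Y_i^* := \min\{Y : (X,Y)\in\Phi_n,\ X \in [i/n,(i+1)/n)\}$, since any other point in the strip is strictly dominated in the Pareto sense by the strip minimum after $x$-coordinates are identified. The $Y_i^*$'s are i.i.d.\ with distribution $F$, because the $y$-projection of $\Phi_n$ restricted to a single strip is a PPP with intensity $G$, whose minimum satisfies $\P(\min > y) = e^{-G([0,y])} = 1-F(y)$. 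Hence $\mathcal{C}^+(\widetilde{\Phi}_n)$ coincides with the lower-left convex hull of the $n$-point set $\{(i/n, Y_i^*)\}_{i=0}^{n-1}$.

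Next I would classify the vertex mismatches. A \emph{lost} vertex is a vertex $V = (X,Y)$ of $\mathcal{C}^+(\Phi_n)$ for which $\psi(V) = (\lfloor nX \rfloor /n, Y)$ is not a vertex of $\mathcal{C}^+(\widetilde{\Phi}_n)$; this happens either because $V$ is not the strip minimum of its own strip (some other point of $\Phi_n$ in the same strip has smaller $y$-coordinate), or because the horizontal shift of size at most $1/n$ moves $\psi(V)$ off the convex hull of $\widetilde{\Phi}_n$. A \emph{gained} vertex is a vertex of $\mathcal{C}^+(\widetilde{\Phi}_n)$ whose $\psi$-preimage is not a vertex of $\mathcal{C}^+(\Phi_n)$; a symmetric analysis shows these correspond to strip minima that become extremal only after their strip-points are collapsed onto the strip boundary.

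The key quantitative estimate uses Lemma \ref{lem:key.general}. Conditional on $V_i^\downarrow = (X_i, Y_i)$ being a vertex of $\mathcal{C}^+(\Phi_n)$, the Poisson mass of the ``danger region'' in which an interfering point could render $\psi(V_i^\downarrow)$ non-extremal is at most $\mathcal{O}(1/n) \cdot n G([0,Y_i]) = \mathcal{O}(G([0,Y_i])) = \mathcal{O}(Y_i^a)$. Since the Markov chain $S_i = -\log Y_i$ has positive drift $\mu$, the $Y_i$'s decay geometrically and $\sum_i \E(Y_i^a)$ is bounded uniformly in $n$. Summing over vertices and applying Markov's inequality controls the lost count by $\mathcal{O}_P(1)$; the gained vertices are controlled analogously by comparing the transition kernel induced on the strip-minimum chain to $I_s$ of Lemma \ref{lem:key.general} and bounding the total-variation discrepancy, which is again $\mathcal{O}(Y_i^a)$ per step and geometrically summable.

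The main obstacle will be handling vertices near the top of the hull, where the inter-vertex $x$-spacing is of order $1/(nY_i^a)$ and becomes comparable to $1/n$ once $Y_i = \Theta(1)$, so the discretization is no longer small relative to the geometry. The rescue is that the number of such ``top'' vertices is itself $\mathcal{O}_P(1)$: the Markov chain $S_i$ takes only $\mathcal{O}_P(1)$ steps to traverse a fixed finite window like $[-\log G^{-1}(1), -\log c]$ (for any constant $c > 0$), since the expected jump size is bounded away from zero. These top vertices therefore contribute at most $\mathcal{O}_P(1)$ to the discrepancy and are absorbed into the final error bound, yielding the claimed estimate.
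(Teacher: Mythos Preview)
Your approach is in the right spirit but takes a detour where the paper finds a shortcut. The paper's argument is purely geometric and does not invoke the Markov chain of Lemma~\ref{lem:key.general} at all: it observes that every vertex of $\mathcal{C}^+(\widetilde{\Phi}_n)$ must be the $\psi$-image of some point of $\Phi_n$ lying within $x$-distance $1/n$ of the boundary of $\mathcal{C}^+(\Phi_n\cup\{(0,Y^\ast)\})$. This $1/n$-fattening $P^{1/n}$ decomposes into parallelograms of width $1/n$ whose $G$-heights are $G(y_{i-1})-G(y_i)$; summing over edges \emph{telescopes} to $G(y^\ast)-G(y_r)\le 2$, so the number of $\Phi_n$-points in $P^{1/n}$ is dominated by a single $\mathrm{Poisson}(2)$ variable, uniformly in the realization of the hull. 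This gives $|\mathcal{C}^+(\widetilde\Phi_n)|\le |\mathcal{C}^+(\Phi_n)|+\mathrm{Poisson}(2)$ in one stroke, and the reverse inequality follows by the symmetric strip argument with the roles of $\Phi_n$ and $\widetilde\Phi_n$ swapped. In particular, no separate treatment of ``top'' vertices is needed: the telescoping bound is agnostic to where the vertices sit.

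Your ``lost'' count is a less efficient version of this same idea: you bound the danger region at each vertex by $G([0,Y_i])$ and then appeal to the geometric decay $\sum_i Y_i^a<\infty$ from the Markov chain, whereas the paper's parallelogram heights $G(y_{i-1})-G(y_i)$ telescope immediately. The real gap is in your ``gained'' count. You propose to compare the transition kernel of the strip-minimum hull chain to $I_s$ in total variation, but you have not established that the discrete hull admits a tractable Markov description in the first place; the angle-indexed process behind Lemma~\ref{lem:key.general} relies on continuity of the $x$-coordinate, and the asserted $\mathcal{O}(Y_i^a)$ TV bound is unjustified. Carrying this through would essentially amount to proving Theorem~\ref{thm:main.rn} directly for $\widetilde\Phi_n$, which is exactly what the coupling is meant to bypass. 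The paper's containment argument avoids this circularity entirely.
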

\begin{proof}
Recall that $L(\alpha)$ is the line supporting $\mathcal{C}^+(\Phi_n)$ with slope $\alpha$. Define
$$ L^{1/n}(\alpha) = \{(x,y) \in \R^2: \exists \epsilon \in (0,1/n] \mbox{ such that } (x-\epsilon,y) \in L(\alpha)\}.$$
Let $(\tilde{x}(\alpha),\tilde{y}(\alpha))$ be the vertex supported by the vector $(1,\alpha)$ in $\widetilde{\Phi}_n$. Then almost surely, $(\tilde{x}(\alpha),\tilde{y}(\alpha)) = \psi(V)$ for some point $V$ of $\Phi_n$ lying in $L^{1/n}(\alpha)$ (see Figure \ref{fig:ln}). 
Let us condition on the vertices of $\mathcal{C}^+(\Phi_n)$ and the values $L_x^i$, $i = 0, 1, \ldots, |\mathcal{C}^+(\Phi_n)|-1$. Define $Y^\ast = \min\{L_y(\alpha_0), G^{-1}(1)\}$. Extend $\Phi_n$ to include the point $(Y^\ast,0)$, and consider the lower convex hull, as in Figure \ref{fig:intersection}. Let $P^{1/n}$ denote the collection of points not in this convex hull, but whose $x$-coordinate at most $\frac{1}{2n}$ away from this convex hull. That is,
$$ P^{1/n} = \{(x,y) \in \R^2: \exists \epsilon \in (0,1/n] \mbox{ such that } (x-\epsilon,y) \in \mathcal{C}^+(\Phi_n \cup (0,Y^\ast))\}. $$ 
By definition, $\Phi_n$ has no point below $\mathcal{C}^+(\Phi_n \cup (0,Y^\ast))$. Thus $|\mathcal{C}^+(\widetilde \Phi_n)|$ is at most the number of points of $\Phi_n$ in $P^{1/n} \cup \, \mathcal{C}^+(\Phi_n \cup (0,Y^\ast))$.

\begin{center}
\begin{figure}[h]
\includegraphics[width=0.6\textwidth]{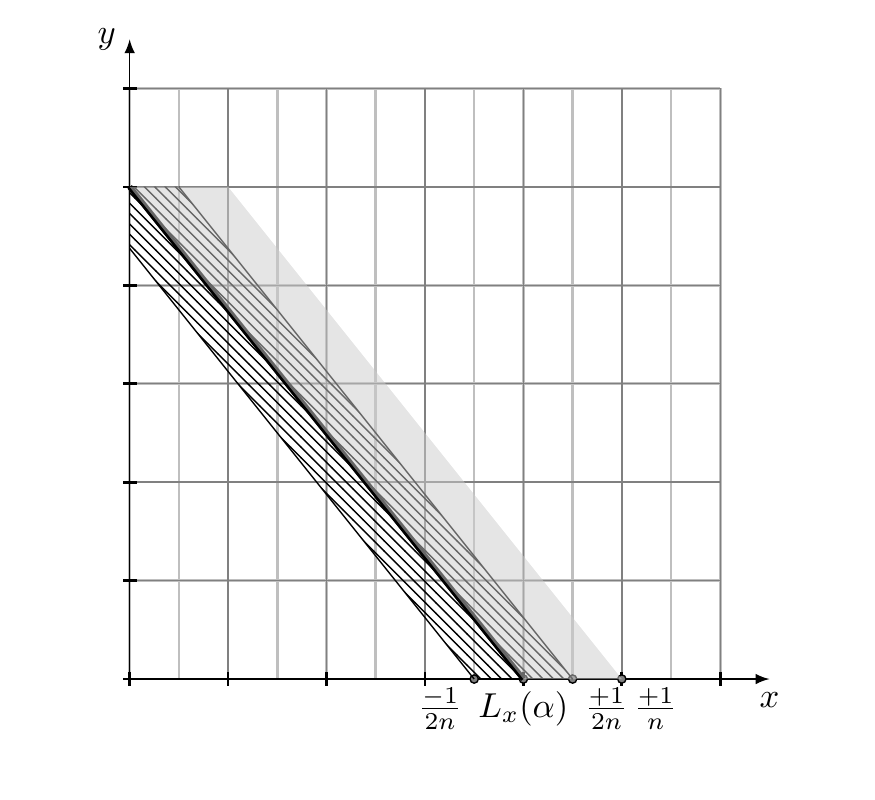}
\caption{The thick line is $L(\alpha)$, the gray region is $L^{1/n}(\alpha)$. The vertex $(\tilde{x}(\alpha),\tilde{y}(\alpha))$ of $\widetilde{\Phi}_n$ has to lie in the stripped region. Thus $(\tilde{x}(\alpha),\tilde{y}(\alpha)) = \psi(V)$ for some point $V$ of $\Phi_n$ in $L^{1/n}(\alpha)$. \protect \label{fig:ln} }
\end{figure}
\end{center}

\begin{center}
\begin{figure}[h]
\includegraphics[width=0.8\textwidth]{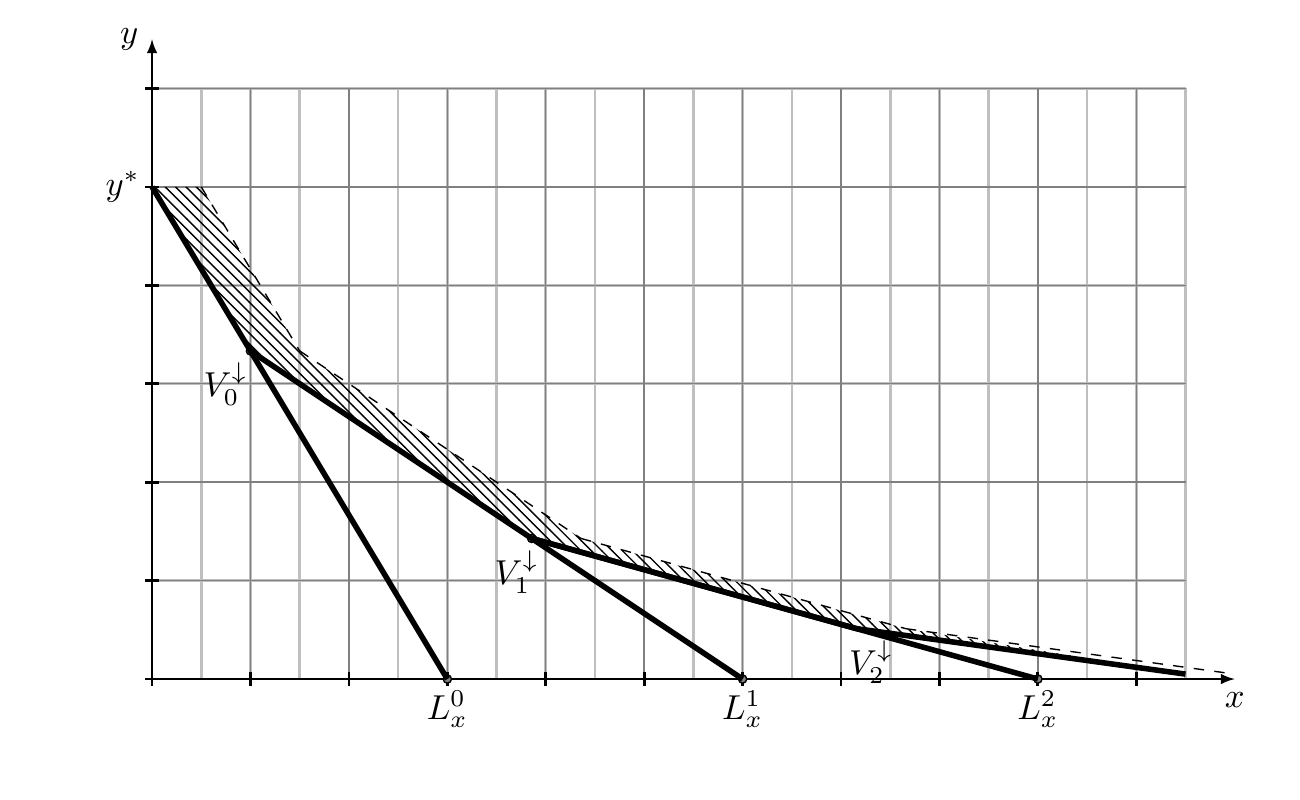}
\vskip-1.2cm
\caption{Conditioned on the vertices of $\mathcal{C}^+(\Phi_n)$, $Y^\ast$ and $L_x^i$, $|\mathcal{C}^+(\widetilde{\Phi}_n)|$ is at most the number of points of $\Phi_n$ lying within $x$-distance $1/n$ of the lower convex hull. This is precisely the region shaded, which consists of parallelograms of width $1/n$. \protect \label{fig:intersection}}
\end{figure}
\end{center}

Divide the region $P^{1/n}$ into parallelograms of width $1/n$. The vertices of the $i$-th parallelogram are $V_i^\downarrow, (x(V_i^\downarrow)+1/n, y(V_i^\downarrow)), V_{i+1}^\downarrow$ and $(x(V_{i+1}^\downarrow)+1/n, y(V_{i+1}^\downarrow))$. Conditioned on $y(V_i^\downarrow) = y_i$, $Y^\ast = y^\ast$, and conditioned on $|\mathcal{C}^+(\Phi_n)| = r$, the area of $P^{1/n}$ under $\lambda \times G$~is
$$ G(y^\ast) - G(y_1) + \sum_{i=1}^{r-1}(G(y_i) - G(y_{i+1})) = G(y^\ast) - G(y_r) \leq 2. $$
since $y^\ast, y_r \leq G^{-1}(1)$. By definition, $\Phi_n$ has $r$ vertices in $\mathcal{C}^+(\Phi_n)$. Therefore,
$$ |\mathcal{C}^+(\widetilde{\Phi}_n) - r| \leq Poisson(2) = \mathcal{O}_P(1) $$
for all realizations of the point process $\Phi_n$.
\end{proof}

\section{Discussion}\label{sec:discuss}

We considered random tropical polynomials  $\Tf_n(x) = \min_{i=1,\ldots,n}(C_i + ix)$ where the coefficients $C_i$ are i.i.d. random variables with some c.d.f. $F$ with support on $(0,\infty)$. We showed that $Z_n$, the number of zeros of $\Tf_n$ satisfies a central limit theorem under mild assumptions on the rate of decay of $F$ near $0$. Specifically, if $F$ near $0$ behaves like the $gamma(a,1)$ distribution for some $a > 0$, then $Z_n$ has the same asymptotics as the number of points on the interval $[0,\log(n)/a]$ of a renewal process with inter-arrival distribution $-\log(Beta(2,a))$. 
The proof techniques draw on connections between random partitions, renewal theory and random polytopes constructed from Poisson point processes. They lead to simpler proofs of the central limit theorem for the number of vertices of the convex hull of $n$ uniform random points in a square.

The assumption that the support of $F$ is $[0,\infty)$ can easily be extended to the case with support on $(c,\infty)$ for some constant $c$, provided the behavior of $F$ near $c$ is as above. This follows from the fact that the number of vertices of a polytope is invariant under translation and scaling by constants. It is crucial, however, that $F$ be a continuous distribution. In particular, Theorem \ref{thm:main.rn} does not hold for discrete distributions. Indeed, if $F(0) = p > 0$, $Z_n$ is at most the sum of two independent $Geometric(p)$ random variables for all $n$, and certainly does not have a normal scaling.

This work is a first stab at \emph{stochastic tropical geometry}, the study of linear functionals and intersections of random tropical varieties. These are common zeros of a collection of random tropical polynomials. In fields with valuations, they are precisely the tropicalization of random algebraic varieties. By considering these varieties at random, we gain insights into the global structure of tropical varieties and their preimages as a collection of sets. Unlike classical varieties, the tropical analogues are polyhedral in nature. Random tropical varieties are strongly connected with random polytopes, a rich branch of stochastic geometry \cite{SKM,schneider,SW}. This is a key ingredient in our proof of Theorem \ref{thm:main.rn}. 

Our next steps will focus on random tropical polynomials in several variables and system of random tropical polynomials. A tropical polynomial in $m$ variable is a map $\Tf: \R^m \to \R$, given by
$$ \Tf(x) = \min_{i \in I}(C_i + i\cdot x), $$
where $C_i \in \R$, $I \subset \mathbb{Z}^m$ is some indexing set, and $\cdot$ is the usual inner product in $\R^m$. The convex hull of $I$ is called the Newton polytope, and its subdivision by $\mathcal{C}(i,C_i)$ is a regular subdivision, or in other words, weighted Delaunay triangulations \cite[\S 2.3]{bernd.trop}. Thus, random tropical polynomials generate a type of random partition of subsets of $\mathbb{Z}^m$. It would be very interesting to understand this lattice partition. For example, if we consider a random tropical polynomial of degree $n$ in $m$ variables with i.i.d coefficients $C_i$, as $n \to \infty$, is there a scaling limit for the number of cells of such partitions? 

\bibliographystyle{plain}
\bibliography{references}
\end{document}